\newcommand{\de}{\partial}
\newcommand{\ddbar}{i \partial \overline{\partial}}
\newcommand{\ov}[1]{\overline{#1}}
\newcommand{\ti}[1]{\tilde{#1}}
\newcommand{\vp}{\varphi}
\newcommand{\ve}{\varepsilon}
\newcommand{\tb}{\widetilde{B}}
\renewcommand{\leq}{\leqslant}
\renewcommand{\geq}{\geqslant}
\newcommand{\be}{\begin{equation}}
\newcommand{\ee}{\end{equation}}
\begin{document}
\newcounter{remark}
\newcounter{theor}
\setcounter{remark}{0}
\setcounter{theor}{1}
\newtheorem{claim}{Claim}
\newtheorem{theorem}{Theorem}[section]
\newtheorem{lemma}[theorem]{Lemma}
\newtheorem{corollary}[theorem]{Corollary}
\newtheorem{conjecture}[theorem]{Conjecture}
\newtheorem{proposition}[theorem]{Proposition}
\newtheorem{question}{question}[section]
\newtheorem{defn}{Definition}[theor]
\theoremstyle{definition}
\newtheorem{rmk}[theorem]{Remark}

\newenvironment{example}[1][Example]{\addtocounter{remark}{1} \begin{trivlist}
\item[\hskip
\labelsep {\bfseries #1  \thesection.\theremark}]}{\end{trivlist}}

\title[Generic regularity of intermediate complex structure limits]{Generic regularity of intermediate complex structure limits}

\author{Yang Li}
\address{Department of Pure Mathematics and Mathematical Statistics, University of Cambridge, UK}
\email{yl454@cam.ac.uk}
\author{Valentino Tosatti}
\address{Courant Institute of Mathematical Sciences, New York University, 251 Mercer St, New York, NY 10012}
\email{tosatti@cims.nyu.edu}

\begin{abstract}
We study certain polarized degenerations of Calabi-Yau manifolds near an intermediate complex structure limit, and improve the potential $C^0$-convergence to a metric convergence result on the generic region for the corresponding collapsing Ricci-flat K\"ahler metrics.
\end{abstract}

\maketitle

\section{Introduction}
We are interested in the behavior of Ricci-flat K\"ahler metrics on compact Calabi-Yau manifolds whose complex structure degenerates. This is a much studied problem in the literature, see e.g. the survey \cite{To} and references therein. The general setting is the following: we are given a proper flat holomorphic map $\pi:\mathfrak{X}\to\mathbb{D}\subset\mathbb{C}$, from a normal $(n+1)$-fold, with a relative polatization $L\to\mathfrak{X}$, such that $X:=\pi^{-1}(\mathbb{D}^*)$ is smooth with $K_X\cong\mathcal{O}_X$ and $\pi$ is a submersion over $\mathbb{D}^*$. The smooth fibers $X_t$ for $t\in\mathbb{D}^*$ are thus compact Calabi-Yau $n$-folds, and by Yau \cite{Ya} they carry the Ricci-flat K\"ahler metric $\omega_{\rm CY,t}\in c_1(L|_{X_t})$. These metrics have total volume independent of $t$, but their coarse geometry is controlled by an integer $m\in\{0,\dots,n\}$ which is the real dimension of the essential skeleton $\mathrm{Sk}(\mathfrak{X})$ of the family  \cite{MN,NX} (namely the dual intersection complex of the central fiber of a relatively minimal model of a semistable reduction of $\mathfrak{X}$). Indeed, in our previous work \cite{LT} it is shown that when  $m>0$ we have the uniform equivalence
\begin{equation}
\mathrm{diam}(X_t,\omega_{{\rm CY},t})\sim |\log|t||^{\frac{1}{2}},
\end{equation}
while when $m=0$ the earlier works \cite{Ta,To2} proved that
\begin{equation}
\mathrm{diam}(X_t,\omega_{{\rm CY},t})\sim 1.
\end{equation}

The case $m=0$ is by now well-understood, and in this case the Ricci-flat manifolds $(X_t,\omega_{{\rm CY},t})$ converge in the Gromov-Hausdorff topology to a singular Calabi-Yau metric on a Calabi-Yau variety with klt singularities \cite{DS,EGZ,RZ}. On the other hand, when $m>0$ to obtain a nontrivial limit, one has to look at the rescaled Ricci-flat manifolds 
$(X_t,|\log|t||^{-1}\omega_{{\rm CY},t})$, and the main question is then to understand the limit of these metrics. These rescaled metrics are now volume-collapsed, and their Gromov-Hausdorff limit is expected to be homeomorphic to the real $m$-dimensional simplicial complex $\mathrm{Sk}(\mathfrak{X})$, see e.g. \cite[Conjecture 4.6]{To}.

Recent progress \cite{Li,Li2,Li3,Li4,Li5} (see also \cite{AH,HJMM}) has shown that the K\"ahler potentials of the rescaled metrics converge in a suitable $C^0$-hybrid topology to a potential function on the associated Berkovich space, which is completely determined by a potential function on $\mathrm{Sk}(\mathfrak{X})$ solving a non-Archimedean Monge-Amp\`ere equation.  The case when $m=n$ is known as a large complex structure limit, and is the subject of the Strominger-Yau-Zaslow conjecture \cite{SYZ}. In this case, by leveraging the small perturbation theorem of Savin \cite{Sa} in elliptic PDE theory, it was shown that on the ``generic region'' of $X_t$, which fills up almost the entire volume, the convergence of the potentials is actually in the smooth topology.

The remaining cases when $0<m<n$ are known as ``intermediate complex structure limits".  A large class of examples come from transverse complete intersections inside a given Fano manifold $M$ with $-K_M= dL$ for some ample line bundle $L$,
\[
X_t= \{  F_0 F_1\ldots F_m+ tF=0\} \subset M,
\]
where the sections $F_i\in H^0(M, d_i L)$ and $F\in H^0(M, dL)$ define smooth and transverse divisors, and $d=\sum_{i=0}^m d_i$. Our goal is to improve the $C^0$-potential theoretic convergence to metric convergence. The non-Archimedean potential defines a family of ansatz metrics $\omega_t$ in the class $c_1(L)$, on some large open subset $U_t\subset X_t$ whose percentage of Calabi-Yau measure tends to one. We call $U_t$ the ``generic region", and we allow for $U_t$ to be slightly shrunken in the arguments.

The main result is the following, which is the analog for intermediate complex structure limits of the aforementioned result for large complex structure limits: 

\begin{theorem}\label{main}
On the generic region of $X_t$,  the 
Calabi-Yau metric $\omega_{{\rm CY},t}$ converges in $C^0$-sense to the ansatz metric $\omega_t$ as $t\to 0$, namely
\[
\lim_{t\to 0} \|  \omega_{{\rm CY},t}- \omega_t    \|_{C^0(U_t, \omega_t)} =0.
\]
\end{theorem}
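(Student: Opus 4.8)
\emph{Proof proposal.} The plan is to localize, pass to the natural scale on which the Calabi--Yau fibers $Z$ have unit size, and upgrade the $C^0$-control of the potential to $C^0$-control of the metric by Savin's small perturbation theorem \cite{Sa}, following the strategy of the large complex structure case but accounting for the extra Calabi--Yau factor. I would start from two inputs: first, the results of \cite{Li,Li2,Li3,Li4,Li5}, which on the generic region give that the relative potential $\vp_t$ of $\omega_{{\rm CY},t}$ over the ansatz $\omega_t$ satisfies $\|\vp_t\|_{C^0(U_t)}\to 0$; second, the precise structure of $\omega_t$ on $U_t$, which is a semi-flat-type metric fibered over an open part of $\Sk(\mathfrak X)$ whose fibers are torus bundles over a fixed lower-dimensional Calabi--Yau $(Z,\omega_{{\rm CY},Z})$ with $\dim_{\mathbb C}Z=n-m$. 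Fixing $p\in U_t$ and working in geodesic balls of definite $\omega_t$-radius about $p$, the rescaled ansatz is close, uniformly as $t\to 0$, to a model semi-flat metric on $\mathbb C^m\times Z$ in which the $m$ torus directions collapse at rate $|\log|t||^{-1/2}$, the complex structure is close to the product one, and the Calabi--Yau volume form divided by $\omega_t^n$ is close to a constant (this is the analog of the flat model $\mathbb C^n$ that appears when $m=n$); one should not rescale all the way to unit-size torus directions, as that would not preserve the $C^0$-smallness of $\vp_t$.

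Next I would write the Ricci-flat equation in the form
\[
(\omega_t+\ddbar\vp_t)^n=e^{c_t+f_t}\,\omega_t^n,
\]
with $\ddbar$ relative to $J_t$, $c_t$ a normalizing constant, and $f_t\to 0$ since $\omega_t$ is approximately Ricci-flat on the generic region. The complex Monge--Amp\`ere operator here is degenerate in the ``holomorphic Hessian'' directions, and moreover the collapse of the torus directions destroys uniform ellipticity; one natural way to remove both issues is to average over the local torus action, under which $\omega_{{\rm CY},t}$ is approximately invariant, reducing to a non-degenerate \emph{fibered} Monge--Amp\`ere equation on $B\times Z$, with $B\subset\mathbb R^m$, coupling a real Monge--Amp\`ere operator along $B$ to the fiberwise complex Monge--Amp\`ere operator along $Z$. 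The decisive features are that the limiting fibered equation (model metric, product complex structure, $f_t$ replaced by $0$) is uniformly elliptic near the zero Hessian, because the model on $B\times Z$ has bounded geometry, and that it admits the trivial solution $\vp\equiv 0$, because the model metric is Ricci-flat.

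With this in place, Savin's small perturbation theorem applies to the (torus-averaged) potential, yielding interior $C^{2,\alpha}$-smallness, hence $\|\ddbar\vp_t^{\mathrm{av}}\|_{C^0}\to 0$; combined with a $C^0$-estimate for the torus-non-invariant remainder $\vp_t-\vp_t^{\mathrm{av}}$ and undoing the rescaling, this gives $\|\omega_{{\rm CY},t}-\omega_t\|_{C^0(\omega_t)}\to 0$ near $p$. Since the estimate is uniform over $p$ in the interior of the charts defining the generic region, patching the local estimates — shrinking $U_t$ slightly near its boundary, as allowed — yields Theorem \ref{main}.

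I expect the main obstacle to be the uniform verification, as $t\to 0$, of the hypotheses of Savin's theorem in this setting, which is where the intermediate case genuinely departs from the large complex structure case. One must establish the convergences above with enough uniformity (in a H\"older norm) at the fiber scale, which needs a fine understanding of the ansatz and of the characterization of $U_t$ by the non-Archimedean potential, together with control of the deviation of $J_t$ from the product complex structure; one must set up and solve the Savin-type estimate for the fibered real--complex Monge--Amp\`ere equation on $B\times Z$ uniformly, with constants depending only on fixed geometric data of $(Z,\omega_{{\rm CY},Z})$ such as its curvature and injectivity radius; and one must control the torus-non-invariant part of $\vp_t$. The presence of the genuine Calabi--Yau factor $Z$ --- absent when $m=n$, where everything reduces to a real Monge--Amp\`ere equation on a Euclidean domain --- is what both complicates these steps and caps the conclusion at $C^0$-metric convergence, rather than the smooth potential convergence available in the large complex structure limit.
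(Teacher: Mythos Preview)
Your proposal has a genuine gap at its core. You correctly identify the difficulty---the collapsing Calabi--Yau factor $Y$---but your workaround does not resolve it. There is no single scale at which both the potential is small and the geometry is bounded: at the $\omega_t$-scale where $\|\psi_t\|_{L^\infty}\to 0$, the fiber $Z$ has diameter $\sim T^{-1/2}$ (with $T=|\log|t||$) and the geometry is collapsing, so Savin's theorem does not apply; if instead you rescale so that $Y$ has unit size, the reference becomes $T\omega_t$ and the relevant potential becomes $T\psi_t$, which is \emph{not} known to be small (the input is only $\psi_t\to 0$, with no rate). Your torus-averaging does not help: the $T^m$-fibers are at the even smaller scale $T^{-1}$, and after averaging you are left with a ``fibered'' equation on $B\times Y$ in which $Y$ still collapses at rate $T^{-1/2}$, so it is not uniformly elliptic and Savin cannot be invoked. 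The claim that ``the model on $B\times Z$ has bounded geometry'' is simply false at the scale where the potential is small.

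The paper's resolution is a genuinely two-scale argument that your outline misses. Working at the $\omega_t$-scale, it proves \emph{truncated} versions of Savin's Harnack inequality and De Giorgi iteration, valid only down to scale $T^{-1/2}$. The key new ingredient is Lemma~\ref{supersol}: the fiberwise maximum $\ov{\psi}$ and minimum $\underline{\psi}$ over the entire $Z$-fiber (not just the torus) are a viscosity subsolution and supersolution of the limiting real Monge--Amp\`ere--type PDE on the base $B\subset\mathbb{R}^m$. This allows half of Savin's measure-theoretic argument to be run \emph{downstairs} on $\underline{\psi},\ov{\psi}$; the other half is closed \emph{upstairs} by unwrapping only the $T^m$-factor, sliding paraboloids adapted to $\omega_T^{\rm ref}$, and using $\omega_T$-subharmonicity of $\psi$. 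Iterating yields quadratic approximations $\|\psi-Q_r\|_{L^\infty(\tb_r)}\le r^{2+\alpha}$ for all $r$ down to $\sim T^{-1/2}$, with $Q_r$ depending only on the base variables. At $r\sim T^{-1/2}$ this gives $|T(\psi-Q_r)|\lesssim T^{-\alpha/2}\to 0$; only now, after subtracting $Q_r$ and rescaling to $T\omega_t$, is the geometry bounded and the potential small, and Savin's original theorem finishes the job---yielding in fact $C^\infty$ estimates (Corollary~\ref{DG3}), not merely the $C^0$ metric convergence you suggest is the ceiling.
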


\begin{rmk}
In fact, we prove $C^\infty$ estimates for $\omega_{{\rm CY},t}$ with respect to a local Euclidean metric after stretching the base coordinates and the $T^m$-fibers by $|\log|t||^{\frac{1}{2}},$ see Corollary \ref{DG3} for a precise statement.
\end{rmk}

\begin{rmk}
For a specific intermediate complex structure limit family with $m=1$, this result was known thanks to the work of Sun-Zhang \cite{SZ} who in that case constructed the Calabi-Yau metrics $\omega_{{\rm CY},t}$ by a delicate gluing process. 
\end{rmk}

The idea to prove this result is to adapt the proof of Savin's small perturbation theorem to our collapsing setup. One fundamental difference between the case $m=n$ of large complex structure limits in \cite{Li} and our case $0<m<n$ of intermediate complex structure limits, is that in the first case the generic region of $X_t$ is modeled on a $T^n$-fibration over a ball in $\mathbb{R}^n$, while in our case the model is a $Z$-fibration over a ball in $\mathbb{R}^m$, where $Z$ is a compact manifold which is itself a  $T^m$-bundle over a compact Calabi-Yau $(n-m)$-fold $Y$.  In the case $m=n$ one can then ``unwrap'' the $T^n$-fibers, and directly apply Savin's small perturbation theorem to bootstrap uniform convergence to smooth convergence. In our case $0<m<n$ this idea fails, as one can still unwrap the $T^m$-fibers but not in general the Calabi-Yau manifold $Y$, which is still collapsing (albeit at a slower rate: the torus fibers have size $|\log|t||^{-1}$ while $Y$ has size $|\log|t||^{-\frac{1}{2}}$). Our idea is then to adapt the proof of Savin's theorem in our setting, and prove the analogous main steps (a Harnack inequality and a De Giorgi type iteration), but only in a  truncated sense: these estimates will hold only up to the scale $|\log|t||^{-1}$.

To prove these, we observe that the fiberwise maximum and minimum of the potential are respectively a subsolution and supersolution of the limiting real Monge-Amp\`ere type equation. We apply ``half'' of Savin's results downstairs to these two functions, and working upstairs after unravelling the $T^m$-fibers, we are able to modify Savin's methods to close the proof of Harnack. In the De Giorgi iteration part, the approximating quadratic polynomials are now pulled back from the base, and arguing by contradiction, an important point is that the fiberwise maximum and minimum coincide in the limit. In the end we obtain a truncated H\"older bound, which is only valid up to the scale of the torus fibers. However, this bound is precisely what is needed to apply Savin's original theorem after rescaling the fibers, and thus conclude the proof.

\begin{rmk}
In \cite{Li, Li3, Li4}, the first-named author used the higher order estimates on the generic region for the Ricci-flat metrics under a large complex structure limit, together with an implicit function theorem argument, to construct special Lagrangian torus fibrations on these generic regions. It seems likely that using our main theorem and some implicit function theorem argument, one could then construct coisotropic fibrations on the generic region of our intermediate complex structure limits $X_t$. We leave this question to the interested reader.
\end{rmk}

\begin{rmk}
The techniques of this paper can also be applied in the setting of \cite{HT}, where one has a family of Ricci-flat K\"ahler metrics on a fixed Calabi-Yau manifold, which admits a holomorphic fibration over a lower-dimensional base, and the metrics are shrinking in the fiber directions (which can be an arbitrary Calabi-Yau manifold). However, the resulting higher order estimates one would obtain in this way will not recover the results in \cite{HT} (which give in particular $C^\infty$ estimates for the collapsing Ricci-flat metrics), but only the weaker estimates proved in \cite[Theorem 1.1]{TZ} (which give $C^\infty$ estimates for the Ricci-flat metrics after stretching out the base directions and rescaling).
\end{rmk}

\noindent
{\bf Acknowledgments.} We are grateful to the referee for useful comments. The first-named author was supported by a Royal Society University Research Fellowship. The second-named author was partially supported by NSF grant DMS-2404599.

\section{Setup}\label{set}
\subsection{The Calabi-Yau family}
Here we recall the setup and results from \cite{Li2}. Let $M^{n+1}$ be a Fano manifold and write $-K_M=\left(\sum_{i=0}^m d_i\right)L$, for some ample line bundle $L\to M$ and integers $d_i>0$ and $0<m<n$. Let $F\in H^0(M,-K_M)$, $F_i\in H^0(M, d_iL)$, be sufficiently generic sections so that their zero loci $E_i:=\{F_i=0\}$ and $\{F=0\}$ are smooth and all intersections are transverse. We can then consider the family $\mathfrak{X}\to \mathbb{D}$ given by
$$X_t=\left\{tF+\prod_{i=0}^mF_i=0\right\}\subset M,$$
and polarized by the restriction of $L$.
For $t\in\mathbb{D}^*$, $X_t$ is a smooth Calabi-Yau hypersurface in $M$, while $X_0$ consists of $m+1$ irreducible components. Since the intersection $\bigcap_{i=0}^m E_i$ is nonempty and connected by the Lefschetz hyperplane theorem, the dual intersection complex of $\mathfrak{X}$ is an $m$-dimensional simplex, which we identify with
$$\Delta:=\left\{x\in\mathbb{R}^{n+1}\ \Bigg|\ \sum_{i=0}^mx_i=1,\ x_i\geq 0\right\}.$$
Since $\mathfrak{X}$ is dlt (see e.g. \cite[Proposition 4.1]{PS}), it follows that this is the essential skeleton $\mathrm{Sk}(\mathfrak{X})$ of the family \cite{MN,NX}.

\subsection{Ricci-flat metrics}
For $t\in\mathbb{D}^*$, let $\omega_{{\rm CY},t}$ be the unique Ricci-flat K\"ahler metric on $X_t$ in the class $c_1(L)$. Their volume form can be described as follows. Given a trivializing section $\Omega$ of $K_{\mathfrak{X}/\mathbb{D}}\cong\mathcal{O}_{\mathfrak{X}}$ (cf. \cite[Proposition 4.1]{PS}), up to multiplying $\Omega$ by a suitable integer power of $t$, we may assume without loss that the vanishing orders of $\Omega$ along the $E_i$'s are nonnegative, and their minimum equals zero. Writing $\Omega_t=\Omega|_{X_t}$ for $t\neq 0$, the smooth positive volume form $i^{n^2}\Omega_t\wedge\ov{\Omega_t}$ on $X_t$ has total mass asymptotic to
\begin{equation}
\int_{X_t}i^{n^2}\Omega_t\wedge\ov{\Omega_t}\sim |\log|t||^m,
\end{equation}
as $t\to 0$, see \cite{BJ}. The Ricci-flat metrics $\omega_{{\rm CY},t}$ have normalized volume form equal to
\begin{equation}
\frac{\omega_{{\rm CY},t}^n}{(L^n)}=
\frac{i^{n^2}\Omega_t\wedge\ov{\Omega_t}}{\int_{X_t}i^{n^2}\Omega_t\wedge\ov{\Omega_t}}=:\mu_t,
\end{equation}
where $\int_{X_t}\omega_{{\rm CY},t}^n=\int_{X_t}c_1(L)^n=:(L^n)$ is independent of $t$. We will refer to this as the normalized Calabi-Yau measure.

\subsection{Ansatz reference metrics}
We recall the ansatz metric construction in \cite[\S 5]{Li2}. Let $\Delta= \{  x=(x_0,\ldots x_m)\in \mathbb{R}^{m+1}: x_i\geq 0, \sum_{i=0}^m x_i=1\}$ which carries the Lebesgue measure $\mu_0$ with total measure $\frac{1}{m!} $, and let
\[
\Delta_k^\vee= \left\{ (p_0,\ldots p_m)\in \mathbb{R}_{\leq 0}^{m+1}  : p_k= 0,  \sum_{i=0}^m d_i p_i\geq -1\right\},\quad k=0,1,\ldots, m,
\]
which carries a weight factor $W(p)= \left(1+\sum_{i=0}^m d_i p_i\right)^{n-m} $.
The simplicial complex $\cup_0^m \Delta_k^\vee$ projects homeomorphically to $\bar{\Delta}^\vee$ (defined in \cite[(14)]{Li2}) under the natural projection $\mathbb{R}^{m+1}\to \mathbb{R}^{m+1}/\mathbb{R}(1,\ldots 1)$. Thus $ \bar{\Delta}^\vee $ inherits a measure $W(p)dp$, whose total mass is
\[
\int_{ \bar{\Delta}^\vee} W(p) dp= \frac{ d_0+\ldots + d_m  }{ d_0\ldots d_m   } \frac{(n-m)!  }{ n!} =: \frac{c_0}{m!}
\]
We let the convex function $u: \Delta\to \mathbb{R}$ be the solution to an optimal transport problem, so that $\nabla u$ sends $\mu_0$ to the measure $ c_0^{-1} W(p) dp$ on $\bar{\Delta}^\vee$. By a double Legendre transform, there is a canonical extension $u: \mathbb{R}^{m+1}\to \mathbb{R}$.

Let $\omega_M$ be a K\"ahler metric on the ambient Fano manifold, whose restriction to $Y= \cap_0^m E_j$ agrees with the Calabi-Yau metric $\omega_Y$ on $Y$ in the class $c_1(L)$.
The ansatz reference metrics are
$$\omega_t=\omega_M+\ddbar\vp_t,$$
where
$$\vp_t=|\log|t||\, u\left(-\frac{\log|F_0|_{h^{d_0}}}{|\log|t||},\dots,-\frac{\log|F_m|_{h^{d_m}}}{|\log|t||}\right),$$
and we can write
$$\omega_{{\rm CY},t}=\omega_t+|\log|t||\ddbar\psi_t,$$
with the normalization $\min_{X_t}\psi_t=0$. 
The main result in \cite[Proposition 6.13]{Li2} is that
\begin{equation}\label{c0}
\|\psi_t\|_{L^\infty(X_t)}\to 0,\quad t\to 0.
\end{equation}
Using the function $u$, \cite[Corollary 4.6]{Li2} defines open subsets $\Delta_k\subset\Delta, 0\leq k\leq m,$ whose union has full measure in $\Delta$, and $u$ is smooth on each $\Delta_k$ by \cite[Lemma 4.7]{Li2}.
Futhermore, \cite[Lemma 4.8]{Li2} shows that there are disjoint open neighborhoods $\Delta_k\subset U_k\subset\mathbb{R}^{m+1}$ where the function $u$ \emph{does not depend on the coordinate} $x_k$,  and as a function on $\Delta_k$, the function $u$ satisfies the real Monge-Amp\`ere PDE
\begin{equation}
    \det(D^2 u) \left(1+\sum_{i=0}^m d_i D_i u\right)^{n-m}= c_0.
\end{equation}
(We note that $\Delta_k$ are open in $\Delta$, so their neighborhoods may be taken as disjoint).
Variants of this PDE have also featured in recent works on complete noncompact Calabi-Yau manifolds \cite{CL,CTY}. 
We emphasize that because $u$ does not depend on $x_k$ on $U_k$, it can be locally regarded as a function of $n$ variables. The Hessian $D^2 u$ is taken with respect to these $n$ variables, and $D_k u=0$.

\subsection{Local coordinates.}
Fixing a Hermitian metric $h$ on $L$, on $X_t\cap \{F\neq 0\}, t\neq 0$, we can define smooth real-valued functions $x_0,\dots,x_m$ by
$$x_i:=-\frac{\log|F_i|_{h^{d_i}}}{|\log|t||},\quad 0\leq i\leq m,$$
and use them to define a smooth map
$$\pi_t=(x_0,\dots,x_m):X_t\cap \{F\neq 0\}\to\mathbb{R}^{m+1},$$
with image contained in 
\begin{equation}\label{image}
\left\{\sum_{i=0}^m x_i=1-\frac{\log|F|_{h^{d}}}{|\log|t||}\right\}\subset\mathbb{R}^{m+1},
\end{equation}
where $d=\sum_{i=0}^m d_i$. In particular, on the region with uniformly bounded $\log |F|_{h^d}$, the image is $O(\frac{1}{|\log |t||} )$-close to the standard $m$-simplex.

The connected compact complex manifold
$$Y:=E_J=E_0\cap\cdots \cap E_{m}=\{F_0=\cdots=F_m=0\},$$
is Calabi-Yau, and  a neighborhood of $Y$ inside $X_t\subset M$ contains almost all the normalized Calabi-Yau measure of $X_t$.
Let $0\leq k\leq m$, and $K_k$ be compact subsets properly contained in the interior of $\Delta_k$, and we can assume its complement has arbitrarily small Lebesgue measure. We allow ourselves to slightly shrink $U_k$ to an open neighborhood of $K_k$, and via the logarithmic maps defined by $(x_0,\ldots x_{k-1}, x_{k+1},\ldots x_m)$, this $U_k$ defines some open subset of $X_t$. Taking the union over $k=0,1,\ldots m$, we obtain the generic region  $U_t\subset X_t$, whose normalized Calabi-Yau measure is arbitrarily close to one as $t$ approaches zero.

Near any point in the generic region, we can choose a chart containing it over which $L$ is trivialized, with local coordinates $(z_1,\dots,z_n)$ on $X_t$, where $z_i$  is identified with $F_i$ using the trivialization for $1\leq i\leq m$, while $(z_{m+1}, \dots, z_n)$ are local coordinates on $Y$. 
Each fiber of $\pi_t$ is a $T^m$-bundle over $Y$, and it will be convenient to ``unwrap'' the torus fibers. We thus call
\begin{equation}
\zeta_i=-|\log|t||^{-1} \log z_i  ,\quad 1\leq i\leq m.
\end{equation}
The rescaled ansatz metric  $|\log|t||^{-1}\omega_t$ is uniformly equivalent to
\begin{equation}
\omega^{\rm ref}_t:=\sum_{i=1}^m \sqrt{-1} d\zeta_i\wedge d\ov{\zeta_i}+|\log|t||^{-1}\sum_{i=m+1}^n    \sqrt{-1}  dz_i\wedge d\ov{z_i},
\end{equation}
and furthermore, $|\log|t||\omega^{\rm ref}_t$ is $C^\infty$-uniformly equivalent to the standard Euclidean metric in the rescaled coordinates
$|\log|t||^{\frac{1}{2}}\zeta_1,\dots,|\log|t||^{\frac{1}{2}}\zeta_m,$ $z_{m+1},\dots, z_n$. Recall that 
$\omega_{{\rm CY},t}=\omega_t+|\log|t||\ddbar\psi_t.$ Our goal is to show 
\[
\| \omega_{CY,t}
- \omega_t\|_{C^0(U_t)}\to 0,\quad t\to 0.
\]

\subsection{Model case}\label{sect:modelcase}
In order to prove Theorem \ref{main},  we shall first prove the desired higher order estimates for the PDE in a model setup. In this setup, we are working on an open set inside the total space of
$$\bigoplus_{i=1}^mL^{d_i}\to Y,$$
where $Y$ is a compact Calabi-Yau manifold of dimension $n-m$. We also have sections $F_i\in H^0(Y, d_iL)$, $1\leq i\leq m$.

We have a large parameter $T\gg 1$, which will correspond to $|\log|t||$ in the geometric application. On our open set we have the projection
\[\pi_T=(x_1,\dots, x_m),    \quad x_i= -T^{-1}\log |F_i|_{h^{d_i}}. \]
which maps to $B_1\subset \mathbb{R}^m$ with compact fibers, which are diffeomorphic to $Z$, a $T^m$-bundle over $Y$.
Given a ball $B_r(x)\subset B_1$, we will denote by $\tb_r(x):=\pi_T^{-1}(B_r(x))$ which is diffeomorphic to $B_r(x)\times Z$.
If the center $x=0$ is the origin, we shall simply write $\tb_r$.

For each $T$ on our open set we have a K\"ahler metric
$$\omega_T:=\ddbar u +T^{-1}\omega_Y,$$
where $\omega_Y$ is the pullback of a Ricci-flat metric on $Y$, and
\[
u=u(x_1,\dots, x_m), \quad x_i= -T^{-1}\log |F_i|_{h^{d_i}},
\]
\[
 \det(D^2 u) \left(1+ \sum_{i=1}^m d_i D_i u\right)^{n-m}= c_0.
\]
This $\omega_T$ is a Calabi-Yau metric with respect to (a multiple of) the standard holomorphic volume form
\[\Omega_Y\wedge d\log F_1\wedge\cdots\wedge d\log F_m,\]
where $\Omega_Y$ is the holomorphic volume form on $Y$.

We consider a Ricci-flat K\"ahler metric $\omega_T+\ddbar\psi_T$ which satisfies the complex Monge-Amp\`ere equation
\begin{equation}\label{CMA}
(\omega_T + \ddbar\psi_T)^n=(T^{-1}\omega_Y+\ddbar(u+\psi_T))^n=\omega_T^n,
\end{equation}
and suppose that $\|\psi_T\|_{L^\infty}\to 0$ as $T\to +\infty$. For brevity, we write $\psi:=\psi_T$.
For every point $x\in\mathbb{R}^m$ in the image of $\pi_T$, we define
$$\underline{\psi}(x):=\min_{z\in \pi_T^{-1}(x)}\psi(z),\quad \overline{\psi}(x):=\max_{z\in \pi_T^{-1}(x)}\psi(z),$$
and observe that $\underline{\psi},\overline{\psi}$ are continuous. Our goal is to prove suitable higher order estimates for $\psi$, independent of $T$ large.

\begin{rmk}
The relation between the model case and the geometric setting of the main theorem is the following. Without loss we focus on the $k=0$ case for $\Delta_k$. Using the equation $F_0\ldots F_m= -tF$, we eliminate $F_0$ in terms of $F_1,\ldots F_m, F$. Up to the exponentially small error of identifying the complex structure of the normal bundle of $Y$ with its normal neighborhood, around any point of interest in $U_0$, we can find some neighborhood inside $U_0\subset X_t$ modeled on $\pi_T^{-1}(B_{r_0})$ for some small $r_0\sim \text{dist}(K_0, \partial \Delta_0)>0$. There is a small difference in the complex Monge-Amp\`ere equation, caused by the exponentially small deviation of the holomorphic volume form from the model case.

The moral upshot is that the argument in the model case almost works in the geometric setting, once one takes case of the difference between $B_1$ and $B_{r_0}$, and the $O(e^{-cT})$ error in the equation. The first issue is cosmetic, since once the compact subsets $K_k$ are fixed, then $r_0$ is fixed, and the constants are independent of the small $t$. We will 
 comment briefly on the second issue later. 
\end{rmk}

\section{Harnack Inequality}

In this section we prove a truncated Harnack inequality for $\psi$ in the model case of Section \ref{sect:modelcase}, and briefly comment on the geometric setting. Before we state this, let us observe that the fiberwise maximum and minimum of $\psi$ are sub/supersolutions of a real Monge-Amp\`ere equation:

\begin{lemma}\label{supersol}
The function $u+\underline{\psi}\in C^0(B_1)$ is a viscosity supersolution to the NA MA equation
\begin{equation}\label{nama}
\det(D^2(u+\underline{\psi}))\leq \frac{c_0
}{\left(1+\sum_{i=1}^md_iD_i(u+\underline{\psi})\right)^{n-m}}.
\end{equation}
Similarly, $u+\overline{\psi}\in C^0(B_1)$ is a viscosity subsolution.
\end{lemma}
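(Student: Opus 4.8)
The plan is to read off the super/subsolution property at each point $x_0\in B_1$ directly from the complex Monge--Amp\`ere equation \eqref{CMA} upstairs, by lifting the relevant test function to the total space along $\pi_T$ and then descending the resulting pointwise complex Hessian inequality back to the base via the fibered structure of $\omega_T$. I will describe the supersolution statement for $u+\underline\psi$; the one for $u+\overline\psi$ is symmetric, and slightly easier because there the positivity needed comes for free. (Continuity of $\underline\psi,\overline\psi$ has already been noted.)

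First I would take a convex $C^2$ function $q$ touching $u+\underline\psi$ from below at $x_0$, and---as is standard for the degenerate Monge--Amp\`ere operator---assume in addition $1+\sum_i d_iD_iq(x_0)>0$, since otherwise there is nothing to check. Because $u$ is constant along the fibers of $\pi_T$ and $\underline\psi(x)=\min_{\pi_T^{-1}(x)}\psi$, the pullback $\pi_T^*q$ lies below $u+\psi$ near $\pi_T^{-1}(x_0)$, with equality at any point $z_0\in\pi_T^{-1}(x_0)$ realizing the fiberwise minimum of $\psi$. Thus $(u+\psi)-\pi_T^*q$ has an interior minimum equal to $0$ at $z_0$, so its complex Hessian there is $\geq0$; adding $\omega_T|_{z_0}$ this reads
\[
B:=(\omega_T+\ddbar\psi)\big|_{z_0}\ \geq\ T^{-1}\omega_Y|_{z_0}+\ddbar(\pi_T^*q)\big|_{z_0}=:A .
\]
Here $B>0$ as it is a K\"ahler metric, and $A\geq0$ by the assumptions on $q$ (see the next paragraph), so $0\le A\le B$ forces $\det A\le\det B$; and by \eqref{CMA} one has $\det B=\det\omega_T|_{z_0}$ in any local coordinates.

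The crux is to turn $\det A\le\det\omega_T|_{z_0}$ into the stated inequality on $B_1$. Working at $z_0$ in holomorphic coordinates adapted to the fibration---rescaled fiber coordinates $\zeta_i$ (with real part essentially $x_i$) together with coordinates on $Y$ chosen normal at the point of $Y$ below $z_0$ for both $\omega_Y$ and the weight of $h$---one computes that at $z_0$ the form $i\,\partial x_i\wedge\bar\partial x_k$ equals $\tfrac14\,i\,d\zeta_i\wedge d\bar\zeta_k$, purely in the fiber directions, while $\ddbar x_i$ (being $-T^{-1}$ times the curvature of $h^{d_i}$) is a fixed multiple of $d_iT^{-1}\omega_Y$, purely in the $Y$-directions. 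Hence for any $w=w(x)$ the Hermitian form $\ddbar(\pi_T^*w)|_{z_0}$ is block diagonal: $\tfrac14 D^2 w(x_0)$ in the fiber directions and a fixed multiple of $\big(\sum_i d_iD_iw(x_0)\big)T^{-1}\omega_Y(z_0)$ in the $Y$-directions. With the curvature of $h$ normalized so that the $Y$-block of $\omega_T$ is $\big(1+\sum_i d_iD_iu(x_0)\big)T^{-1}\omega_Y(z_0)$ (this normalization, as in \cite{Li2}, is exactly the compatibility behind the statement that $\omega_T$ is Calabi--Yau for the model holomorphic volume form), and using that $\omega_Y$ is Ricci-flat, both $\det\omega_T|_{z_0}$ and $\det A$ factor as one and the same positive quantity $c=c(z_0)$ times, respectively, $\det D^2 u(x_0)\big(1+\sum_i d_iD_iu(x_0)\big)^{n-m}$ and $\det D^2 q(x_0)\big(1+\sum_i d_iD_iq(x_0)\big)^{n-m}$. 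Since $u$ solves the real Monge--Amp\`ere equation, $\det\omega_T|_{z_0}=c\,c_0$, so $\det A\le\det\omega_T|_{z_0}$ is exactly $\det D^2 q(x_0)\big(1+\sum_i d_iD_iq(x_0)\big)^{n-m}\le c_0$, which is \eqref{nama} at $x_0$. For the subsolution claim one tests $u+\overline\psi$ from above and lifts to a fiberwise maximum point $z_0$, getting $B\le A$ with $B>0$, hence $A>0$ automatically (so $D^2q(x_0)>0$ and $1+\sum_i d_iD_iq(x_0)>0$ for free), and $\det B\le\det A$ yields the reverse inequality $\det D^2q(x_0)\big(1+\sum_i d_iD_iq(x_0)\big)^{n-m}\ge c_0$.

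The step I expect to demand the most care is precisely this block-diagonalization and the tracking of constants: the entire mechanism hinges on the curvature of $h$ forcing $\ddbar(\pi_T^*w)$ to acquire, in the $Y$-directions, a term proportional to $\sum_i d_iD_iw$, which is what generates the non-Archimedean weight $(1+\sum_i d_iD_iw)^{n-m}$ matching the equation for $u$; away from $z_0$ these formulas carry $O(T^{-1})$ cross terms, so it is essential that the Hessian comparison is needed only at the single point $z_0$, where one may use adapted coordinates. A lesser point to justify is the reduction, in the definition of viscosity supersolution, to convex test functions with $1+\sum_i d_iD_iq(x_0)>0$; this is the usual convention for the Monge--Amp\`ere operator, and the excluded cases are either vacuous or trivial. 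Finally, in the geometric setting of Theorem \ref{main} the same argument goes through, with the exponentially small deviation of the holomorphic volume form only perturbing $c_0$ to $c_0(1+O(e^{-cT}))$.
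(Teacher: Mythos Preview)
Your proof is correct and follows essentially the same strategy as the paper's: lift the test function through $\pi_T$ to a fiberwise extremum, use the complex Monge--Amp\`ere equation \eqref{CMA} at that point, and convert the resulting Hessian inequality on the total space into the real inequality on $B_1$. The only difference is cosmetic: the paper packages the block-diagonal volume computation as the identity \eqref{null} (quoted from \cite[Proposition 5.6]{Li2}), whereas you re-derive it in adapted coordinates, and your explicit positivity hypothesis on the test function makes precise an assumption the paper leaves implicit.
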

\begin{proof}
Given any smooth function $v$ depending only on the base variables $x_1,\ldots x_m$, the volume form $(T^{-1}\omega_Y+ i\partial \bar{\partial} v)^n$ is
\begin{equation}\label{null}
c_0^{-1} \det(D^2 v ) \left(1+\sum_{i=1}^md_iD_iv\right)^{n-m} \omega_T^n,
\end{equation}
cf. \cite[Proposition 5.6]{Li2}.
The viscosity supersolution property can then be checked by comparison of $u+\underline{\psi}$ with any test functions $v$ which touches it from below at some $x_0\in B_1$. Pick $\hat{x}_0\in \pi_T^{-1}(x_0)$ with $\psi(\hat{x}_0)=\underline{\psi}(x_0)$. Then it is immediate that $v$, pulled back to a function on $\tb_1$, touches $\psi$ from below at $\hat{x}_0$. Since $\psi$ is a solution of the complex Monge-Amp\`ere equation \eqref{CMA}, it follows that at $\hat{x}_0$ we must have
$$(T^{-1}\omega_Y+ i\partial \bar{\partial} v)^n\leq \omega_T^n,$$
and so at $x_0$ the quantity in \eqref{null}, divided by $\omega_T^n$, is bounded above by $1$, as desired. The argument for $u+\overline{\psi}$ is analogous.
\end{proof}

We can now prove the following  truncated Harnack inequality for $\psi$:

\begin{theorem}\label{harnack1}(Harnack inequality)
The following holds for sufficiently large $T>1$.
There is some small constant $0<\ve_0\ll 1$, and a constant $0<\Theta<1$, such that if
\begin{equation}
\mathrm{osc}_{\tb_{r}}  \psi\leq \ve_0 r^2, \quad 1> r\geq 100 T^{-1/2},
\end{equation}
then we have the oscillation decay estimate
\begin{equation}\label{osc}
\mathrm{osc}_{\tb_{r/4}}\psi\leq \Theta\,\mathrm{osc}_{\tb_{r}} \psi.
\end{equation}
\end{theorem}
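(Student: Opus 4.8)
The plan is to first normalize and rescale, then reduce everything to a Krylov--Safonov oscillation-decay argument downstairs, feeding in one genuinely new ingredient from upstairs. Replacing the base coordinates $x$ by $x/r$ turns $\tb_r,\tb_{r/4}$ into $\tb_1,\tb_{1/4}$ and the parameter $T$ into $Tr^2\ge 10^4$, so up to relabelling we may assume $r=1$, $T\ge 10^4$, and (after subtracting a constant) $0\le\psi\le\delta$ on $\tb_1$ with $\delta:=\mathrm{osc}_{\tb_1}\psi\le\ve_0$; the goal becomes $\mathrm{osc}_{\tb_{1/4}}\psi\le\Theta\delta$ for a universal $\Theta<1$. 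By Lemma \ref{supersol}, $u+\underline{\psi}$ is a viscosity supersolution and $u+\overline{\psi}$ a viscosity subsolution of \eqref{nama}. Since the operator in \eqref{nama} is concave and uniformly elliptic at the smooth, strictly convex solution $u$ (whose Hessian is pinched between positive multiples of the identity on the fixed compact piece of $\Delta_k$ under consideration), comparison with $u$ shows that $\underline{\psi}$ and $\overline{\psi}$ satisfy, in the viscosity sense on $B_1$, the Pucci extremal inequalities $\mathcal{M}^-_{\lambda,\Lambda}(D^2\underline{\psi})-C_0|D\underline{\psi}|\le 0$ and $\mathcal{M}^+_{\lambda,\Lambda}(D^2\overline{\psi})+C_0|D\overline{\psi}|\ge 0$, with $\lambda,\Lambda,C_0$ universal. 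This is the ``half of Savin's results downstairs'': $\underline{\psi}$ and $\overline{\psi}$ are now in the scope of the weak Harnack inequality and the local maximum principle.

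The one further ingredient needed is a fiberwise oscillation bound
\[
\sup_{B_{1/2}}\bigl(\overline{\psi}-\underline{\psi}\bigr)\le\tfrac{1}{4}\,\delta ,
\]
and this is exactly where the hypothesis $r\ge 100\,T^{-1/2}$ (now $T\ge 10^4$) enters, and why the Harnack inequality is only truncated. I would prove it by working upstairs after unravelling the $T^m$-fibers: each fiber of $\pi_T$ is a copy of $Z$ of $\omega_T$-diameter $\lesssim T^{-1/2}$; the Monge--Amp\`ere equation \eqref{CMA} is nondegenerate, and after rescaling $\omega_T$ to unit size near a fiber it becomes, in suitable coordinates, a uniformly elliptic complex Monge--Amp\`ere equation on a unit ball with two-sided bounded right-hand side (here $\omega_T$ is $C^\infty$-uniformly comparable to a fixed model metric, as in Section \ref{set}); interior estimates for $\psi$ valid up to the fiber scale, combined with $\mathrm{osc}_{\tb_1}\psi=\delta\le\ve_0$, then control the variation of $\psi$ across each fiber by $o_T(1)\cdot\delta$, hence by $\tfrac14\delta$ once $T$ is large. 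I expect this step to be the main obstacle: below the fiber scale $T^{-1/2}$ the fiber geometry is no longer resolved, the estimate genuinely fails, and it is precisely here that one must depart from Savin's original argument.

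Granting the fiberwise bound, the oscillation decay is routine Krylov--Safonov on $B_1$. Since $\underline{\psi}\le\overline{\psi}$, the sets $\{\overline{\psi}\ge\delta/2\}$ and $\{\underline{\psi}\le\delta/2\}$ cover $B_{1/2}$, so one of them has measure at least $\tfrac12|B_{1/2}|$. If $|\{\underline{\psi}\le\delta/2\}\cap B_{1/2}|\ge\tfrac12|B_{1/2}|$, then by the fiberwise bound $\overline{\psi}\le\tfrac34\delta$ on that set, so $w:=\delta-\overline{\psi}\ge 0$ is a viscosity supersolution of $\mathcal{M}^-_{\lambda,\Lambda}(D^2w)-C_0|Dw|\le 0$ on $B_1$ with $w\ge\tfrac14\delta$ on a set of measure $\ge\tfrac12|B_{1/2}|$ inside $B_{1/2}$; the weak Harnack inequality then gives $\inf_{B_{1/4}}w\ge c_1\delta$ for a universal $c_1>0$, i.e. $\overline{\psi}\le(1-c_1)\delta$ on $B_{1/4}$, so $\mathrm{osc}_{\tb_{1/4}}\psi=\sup_{B_{1/4}}\overline{\psi}-\inf_{B_{1/4}}\underline{\psi}\le(1-c_1)\delta$. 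Otherwise $|\{\overline{\psi}\ge\delta/2\}\cap B_{1/2}|\ge\tfrac12|B_{1/2}|$, so $\underline{\psi}\ge\tfrac14\delta$ there, and the weak Harnack inequality applied to the nonnegative supersolution $\underline{\psi}$ gives $\inf_{B_{1/4}}\underline{\psi}\ge c_1\delta$, whence $\psi\ge c_1\delta$ on $\tb_{1/4}$ and again $\mathrm{osc}_{\tb_{1/4}}\psi\le(1-c_1)\delta$. Taking $\Theta:=1-c_1$ and choosing $\ve_0$ small enough (depending only on the universal constants) that the viscosity and interior estimates above apply finishes the proof in the model case; in the geometric setting one argues identically, absorbing the $O(e^{-cT})$ discrepancy in \eqref{CMA} into the right-hand side of the weak Harnack inequality.
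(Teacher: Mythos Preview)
Your argument has a genuine gap at the fiberwise oscillation bound $\sup_{B_{1/2}}(\overline\psi-\underline\psi)\le\tfrac14\delta$. After your rescaling the effective parameter is $Tr^2\ge 10^4$, and at the borderline $r=100\,T^{-1/2}$ it equals exactly $10^4$; there is no large parameter left, so the phrase ``$o_T(1)\cdot\delta$, hence $\le\tfrac14\delta$ once $T$ is large'' has no force at that scale. More fundamentally, the bound does not follow from interior estimates even when the rescaled $T$ is large. Zooming in to the fiber scale and working with $T\omega_T$ puts the equation on a unit ball, but the relevant potential is $T\psi$, whose oscillation is $T\delta$; since $\delta\le\ve_0$ is fixed while $T$ is unbounded, $T\delta$ need not be small and Savin's small perturbation theorem does not apply at this scale. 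A gradient bound would at best give fiberwise oscillation $\le C\,T^{-1/2}$ with $C$ absolute, which is useless when $\delta\ll T^{-1/2}$. To obtain a bound of the form $o_T(1)\cdot\delta$ you would need a H\"older decay of the oscillation of $\psi$ from scale $1$ down to the fiber scale, and that is precisely the truncated Harnack inequality you are trying to prove; the step is circular.

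The paper circumvents this by never estimating $\overline\psi-\underline\psi$ directly. After running Savin's sliding-paraboloid lemmas downstairs on $\underline\psi$ (paraboloids of opening $aC^k$, with $a$ proportional to $\inf_{\tb_{r/4}}\psi$) to produce a large-measure set $D_k\subset B_{r/3}$, it lifts each $x_0\in D_k$ to a fiberwise minimizer $\hat x_0$ and slides paraboloids \emph{upstairs} in the full coordinates $(x,\theta,z)$, with the same opening $aC^k$ (scaled by $T^{-1}$ in the $z$-directions). At the upstairs touching points the complex Monge--Amp\`ere relation $\prod_i(1+\lambda_i)=1$ converts the one-sided Hessian lower bound into a two-sided bound $|D^2\psi|_{g_T}\le aC'C^k$, producing a set $\mathcal T$ of definite $\omega_T$-measure in a fiber-scale neighborhood on which $\psi\le aC'C^k r^2$. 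Since $\Delta_{\omega_T}\psi\ge 0$, the Green representation then bounds $\overline\psi(x_0)$ by a weighted average with a definite contribution from $\mathcal T$; combining with the analogous large-measure estimate for $\overline\psi$ from above yields $\sup_{\tb_{r/4}}\psi\le C\inf_{\tb_{r/4}}\psi$. The crucial feature is that every constant scales with the paraboloid opening $a$, hence with the solution itself, so no independent fiberwise smallness is ever needed.
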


\begin{proof} Let $\ve_0>0$ be a small constant, to be chosen later.
By adding a constant to $\psi$, we can assume that $0\leq \psi\leq \ve_0 r^2$, and the infimum on $\tb_r$ is zero. The oscillation decay would follow if 
\begin{equation}\label{claim}
\sup_{\tb_{r/4}}  \psi   \leq C \inf_{  \tb_{r/4}  }  \psi ,
\end{equation}
since either $\inf_{  \tb_{r/4}  }  \psi > \frac{1}{2C}\mathrm{osc}_{\tb_{r}} \psi$ implies $\mathrm{osc}_{\tb_{r/4}}\psi\leq (1-\frac{1}{2C})\,\mathrm{osc}_{\tb_{r}} \psi$, or $\inf_{  \tb_{r/4}  }  \psi \leq  \frac{1}{2C}\mathrm{osc}_{\tb_{r}} \psi$ implies $\mathrm{osc}_{\tb_{r/4}}\psi\leq \frac{1}{2}\,\mathrm{osc}_{\tb_{r}} \psi$.

Thanks to Lemma \ref{supersol}, $\underline{\psi}$ is a supersolution of \eqref{nama} in $B=B_r(0)$. This PDE satisfies the structural hypotheses of \cite{Sa}, since it is given by the elliptic operator in $\mathbb{R}^m$
\begin{equation}\label{drugo}
F(M,p,z,x):=\det(D^2u(x)+ M )\left(1+\sum_{i=1}^m d_i (D_i u(x)+ p_i)\right)^{n-m}-c_0,
\end{equation}
which  satisfies (H1)--(H3) of \cite{Sa}. As in \cite{Sa}, given $a>0$, let
$$A_a:=\{x\in B\ |\ \underline{\psi}(x)\leq a r^2 \text{ and }\exists y\in \ov{B} \text{ s.t. }\inf_{z\in B}\left(\underline{\psi}(z)+\frac{a}{2}|z-y|^2\right)=\underline{\psi}(x)+\frac{a}{2}|x-y|^2\}$$
be the set of touching points (from below) of the graph of $\underline{\psi}$ by paraboloids with opening $a$ and vertex in $\ov{B}$.

Pick a point $p\in \ov{\tb_{r/4}}$ with $ \inf_{B_{r/4}} \underline{
\psi}=\psi(p)$, and let
$a r^2:=20\,\psi(p)>0$ (the strict positivity follows from strong maximum principle, unless $\psi=0$).
If we have $aC\geq \ve_0 $ for some uniform constant $C$, then the desired inequality \eqref{claim} follows immediately since
$$\sup_{\tb_{r/4}}\psi\leq \ve_0 r^2\leq C a r^2=20  C  \inf_{\tb_{r/4}}\psi   .$$
Thus, we can assume that $a\ll\ve_0 $.

We can then apply directly Lemmas 2.1, 2.2 and 2.3 in \cite{Sa}, as in the beginning of the proof of Theorem 1.1 there (especially (16)--(17)), and they show there exist uniform $C>0, 0<\mu<1,$ such that if $k\in\mathbb{N}$ satisfies
\begin{equation}\label{hyp}
aC^{k+1}\leq\ve_0,
\end{equation}
then the set
$$D_k:=A_{aC^k}\cap\ov{B}_{r/3},$$
satisfies
$$|B_{r/3}\backslash D_k|\leq (1-\mu)^k|B_{r/3}|,$$
and for every $x\in D_k$ we have
\begin{equation}\label{upp}
\underline{\psi}(x)\leq aC^k r^2.
\end{equation}
As explained above, without loss \eqref{hyp} holds for a fixed value $k$ to be determined.

Likewise, by applying the same argument upside down, we find a subset $D_k'\subset \ov{B}_{r/3}$
such that
$$|B_{r/3}\backslash D_k'|\leq (1-\mu)^k|B_{r/3}|,$$
and for every $x\in D_k'$ we have
\begin{equation}\label{eqn:upperbarpsi}
\overline{\psi}(x)\geq  \sup_{\tb_r} \psi -aC^k r^2.
\end{equation}

Given any point $x_0\in D_k,$ choose $\hat{x}_0\in \pi_T^{-1}(x_0)$ with $\psi(\hat{x}_0)=\underline{\psi}(x_0)$, and fix a coordinate chart near $\hat{x}_0$ over which $L$ is trivialized, with local coordinates $(z_1,\dots,z_n)$ where $z_i$  is identified with $F_i$ using the trivialization for $1\leq i\leq m$, while $(z_{m+1}, \dots, z_n)$ are local coordinates on $Y$. 
Each fiber of $\pi_T$ is a $T^m$-bundle over $Y$, and it will be convenient to ``unwrap'' the torus fibers. We thus call
\begin{equation}
\zeta_i=-T^{-1} \log z_i  ,\quad 1\leq i\leq m,
\end{equation}
and observe that
\begin{equation}\label{appr}
\mathrm{Re}\zeta_i = x_i+O\left(T^{-1}\right).
\end{equation}
while $\theta_i=\mathrm{Im}\zeta_i$ are $2\pi T^{-1}$-periodic coordinates. We write
\[
(x_1,\dots,x_m,\theta_1,\dots,\theta_m, z_{m+1},\dots,z_n)=:(x,\theta,z).
\]
The ansatz metric  $\omega_T$ is uniformly equivalent to
\begin{equation}
\omega^{\rm ref}_T:=\sum_{i=1}^m \sqrt{-1} d\zeta_i\wedge d\ov{\zeta_i}+T^{-1}\sum_{i=m+1}^n    \sqrt{-1}  dz_i\wedge d\ov{z_i},
\end{equation}
while its volume form is uniformly equivalent to
\[
T^{m-n}\omega_M^{n-m}\wedge id\zeta_1\wedge d\ov{\zeta_1}\wedge\cdots\wedge id\zeta_m\wedge d\ov{\zeta_m}
\]
The fibers of $\pi_T$ exhibit two different length scales as $T\to +
\infty$: the length scale of the Calabi-Yau manifold $Y$ is $T^{-\frac{1}{2}}$ while the length scale of the $T^m$-fibers is much smaller, of order $T^{-1}$.

We will aim to bound $\psi$ from the above on a subset of nontrivial measure.
Given our point $x_0\in D_k$, by definition the graph of $\underline{\psi}$ is touched from below at $x_0$ by the paraboloid
$$Q(x):=-\frac{aC^k}{2}|x-y_0|^2+c,$$
with vertex at $y_0\in \ov{B}$ and opening $aC^k$. Note that $c\geq 0$ because $\underline{\psi}\geq 0$. Also, using \eqref{upp} we see that
\begin{equation}\label{uppc}
c=Q(x_0)+\frac{aC^k}{2}|x_0-y_0|^2\leq \underline{\psi}(x_0)+aC^kr^2\leq 2aC^kr^2.
\end{equation}
Our point $\hat{x}_0$ has coordinates $(x_0,\theta_0,z_0)$ say, and we consider an open neighborhood $\mathcal{U}$ of $(x_0,\theta_0,z_0)$,
\[\left\{\begin{aligned}
|x_i -(x_0)_i|&< 2T^{-\frac{1}{2}}, \quad 1\leq i\leq m,\\
|\theta_i -(\theta_0)_i|&<  4\pi  T^{-1}, \quad 1\leq i\leq m,\\
|z_j -(z_0)_j|&< 2, \quad m+1\leq j\leq n,\end{aligned}\right.\]
Given now the set $\mathcal{V}$ consisting of $(\tilde{x}_0, \ti{\theta}_0,\ti{z}_0)$ with
\begin{equation}\label{datum}
|\tilde{x}_0- x_0|\leq \frac{1}{10} T^{-\frac{1}{2}}, \quad |\ti{\theta}_0-\theta_0|\leq \frac{1}{10} T^{-1},\quad |\ti{z}_0-z_0|\leq  \frac{1}{10},
\end{equation}
and $\ti{c}\in\mathbb{R}$, we
define a paraboloid by
\[
\begin{split}
P(x,\theta,z):=& -\frac{aC^k}{2}\left(\sum_{i=1}^m|x_i-(y_0)_i|^2+\sum_{i=1}^m|\theta_i-(\ti{\theta}_0)_i|^2  +  \sum_{i=1}^m|x_i-(\tilde{x}_0)_i|^2  \right)
\\
& -\frac{aC^k}{2T} \sum_{j=m+1}^n|z_j-(\ti{z}_0)_j|^2+\ti{c},
\end{split}
\]
and slide it from below (by increasing $\ti{c}$) until $P$ touches the graph of $\psi$ from below for the first time, at some point $(\hat{x},\hat{\theta},\hat{z})$. We claim that $(\hat{x},\hat{\theta},\hat{z})\in\mathcal{U}$.

We observe that by $Q(x_0)= \underline{\psi}(x_0)$,
\[
\begin{split}
P(x_0,\theta_0,z_0)=& Q(x_0) + \tilde{c}- c - \frac{aC^k T^{-1}}{2}  \sum_{j=m+1}^n|(z_0)_j-(\ti{z}_0)_j|^2\\
&\ - \frac{aC^k}{2}\sum_{i=1}^m (|(\theta_0)_i-(\ti{\theta}_0)_i|^2 + |x_i- (\tilde{x}_0)_i|^2)
\\
\geq &  \underline{\psi}(x_0)   + \tilde{c}- c -  aC^k T^{-1},
\end{split}
\]
but $ P(x_0,\theta_0,z_0) \leq  \psi(\hat{x}_0)=\underline{\psi}(x_0) $, so
\begin{equation}\label{dotum}
\ti{c}-c\leq   aC^kT^{-1} .
\end{equation}
Then, at the touching point $(\hat{x},\hat{\theta},\hat{z})$ we have
$$Q(\hat{x})\leq\underline{\psi}(\hat{x})\leq \psi(\hat{x},\hat{\theta},\hat{z})=P(\hat{x},\hat{\theta},\hat{z}),$$
and substituting the expressions for $P,Q$ this says
\[\begin{split}
\frac{aC^k}{2}|\hat{x} -\tilde{x}_0|^2+  \frac{1}{2} aC^k T^{-1}  |\hat{z}-\ti{z}_0|^2&\leq \ti{c}-c\leq  aC^k T^{-1}.
\end{split}\]
Moreover, since $\psi$ is a periodic function in $\theta$ while $P$ is quadratic in $\theta$, we deduce that $|\hat{\theta}-\tilde{\theta}_0| <2\pi T^{-1}$, namely the extremum is achieved within one period. These imply that
 $(\hat{x},\hat{\theta},\hat{z})\in\mathcal{U}$.

Observe, using \eqref{uppc}, we have
\begin{equation}\label{uppp}
\psi(\hat{x},\hat{\theta},\hat{z})=P(\hat{x},\hat{\theta},\hat{z})\leq \ti{c}\leq c+aC^k T^{-1}\leq aC'C^kr^2,
\end{equation}
since $r\geq 100T^{-\frac{1}{2}}$.

We then establish an $\omega^{\rm ref}_T$-volume lower bound for the set of touching points $(\hat{x},\hat{\theta},\hat{z})$ that we have just found, as the paraboloid parameters $(\tilde{x}_0,\ti{\theta}_0,\ti{z}_0)\in \mathcal{V}$ varies. At any such touching point $(\hat{x},\hat{\theta},\hat{z})$, by the second derivative test, the Hessian of $\psi$ is bounded below by the Hessian of $P$, hence
\begin{equation*}\label{e1}
D^2\psi\geq -aC^k g^{\rm ref}_T \geq -a C'C^{k}  g_T.
\end{equation*}
We recall that $aC^k
\leq \ve_0 $ is small.
From the complex Monge-Amp\`ere equation \eqref{CMA}, the eigenvalues $\{\lambda_i\}_{i=1}^n$ of $\ddbar\psi$ with respect to $\omega_T$ satisfies
$$\prod_{i=1}^n (1+\lambda_i)=1,$$ hence we obtain the opposite bound
\begin{equation}\label{e2}
|D^2\psi |_{ g_T  }  \leq  aC'C^k.
\end{equation}
Now $(\tilde{x}_0,\ti{\theta}_0,\ti{z}_0)$ and the touching point $(\hat{x},\hat{\theta},\hat{z})$ are related by
\begin{equation}\label{touch}
\tilde{x}_0= 2\hat{x}-y_0+\frac{\de_x \psi(\hat{x},\hat{\theta},\hat{z})}{aC^k},\quad \ti{\theta}_0=\hat{\theta}+\frac{\de_\theta \psi(\hat{x},\hat{\theta},\hat{z})}{aC^k},
\quad \ti{z}_0=\hat{z}+T\frac{\de_z \psi(\hat{x},\hat{\theta},\hat{z})}{aC^k}.
\end{equation}
As $(\ti{x}_0,\ti{\theta}_0,\ti{z}_0)$ varies in $\mathcal{V}$, call $\mathcal{T}\subset\mathcal{U}$ the set of corresponding touching points $(\hat{x},\hat{\theta},\hat{z})$.
From \eqref{touch} we obtain the measure estimate 
\begin{equation}\begin{split}
C^{-1} \int_{\mathcal{U}}(\omega_T^{\rm ref})^n\leq \int_{\mathcal{V}}(\omega_T^{\rm ref})^n&\leq C\int_{\mathcal{T}}\left|\det\left(\widetilde{\mathrm{Id}}+\frac{(g^{\rm ref}_T)^{-1}D^2\psi}{aC^k}\right)\right|(\omega_T^{\rm ref})^n\\
&\leq C\int_{\mathcal{T}}(\omega_T^{\rm ref})^n,
\end{split}\end{equation}
where we used the Hessian bound in \eqref{e2} (recall that $g_T$ and $g^{\rm ref}_T$ are uniformly equivalent), and where $\widetilde{\mathrm{Id}}$ is diagonal matrix 
\[\widetilde{\mathrm{Id}}:= 
\begin{pmatrix} 2\mathrm{Id} &  &\\
 &\mathrm{Id} & \\
 & & \mathrm{Id}
\end{pmatrix}.\] 
 In conclusion, for any $x_0\in D_k$, then within the open neighborhood $\mathcal{U}$ of $\hat{x}_0$, the set $\mathcal{T}$ of touching points occupies a definite proportion of the $\omega_T^{\rm ref}$-measure in $\mathcal{U}$. In particular, $\psi\leq a C'C^k r^2$ holds on $\mathcal{T}$, thanks to \eqref{uppp}.

Next we notice that the complex Monge-Amp\`ere equation $(\omega_T+  \ddbar{\psi})^n= \omega_T^n$, together with $\omega_T+\ddbar\psi>0$ imply
$$1+\frac{1}{n}\Delta_{\omega_T} \psi=\frac{1}{n}\mathrm{tr}_{\omega_T}(\omega_T+  \ddbar{\psi})\geq \left(\frac{(\omega_T+  \ddbar{\psi})^n}{\omega_T^n}\right)^{\frac{1}{n}}=1,$$
i.e. $\Delta_{\omega_T} \psi\geq 0$.
Thus $\psi$ is a non-negative valued $\omega_T$-subharmonic function on an open neighborhood of $\mathcal{U}$, which satisfies
$\psi\leq a C'C^k r^2,$  on the set $\mathcal{T}$ with
$|\mathcal{T}|_{\omega_T}\geq C^{-1}|\mathcal{U}|_{\omega_T}.$

Given $x_0\in D_k\cap B_{r/4-10T^{-1/2}}$, we take the ``fiber-scale neighborhood" $\mathcal{U}'= \tb_{10T^{-1/2}}(x_0)$, so that $\mathcal{T}\subset \mathcal{U}\subset \mathcal{U}'$, and the $\omega_T^{\rm ref}$-volumes of $\mathcal{U}$ and $\mathcal{U}'$ are comparable. By the above, $\mathcal{T}$ occupies a definite percentage of the $\omega_T^n$-measure on $\mathcal{U}'$. We also note that up to unwinding the $T^m$-fibers, the region $\mathcal{U}'$ has bounded geometry with respect to the $\omega_T$-metric. Let $x\in \pi_T^{-1}(x_0)$ be a point achieving the fiberwise maximum $\ov{\psi}(x_0)$. Using the Green representation formula, and the $\omega_T$-subharmonicity of $\psi$, we can upper bound $\psi(x)$ by a weighted average of $\psi$ on $\mathcal{U}'$. We can decompose the contribution from $\mathcal{T} $
and the complement $\mathcal{U}'\setminus \mathcal{T}$, and use $\psi\leq aC'C^k r^2$ on $\mathcal{T}$, and the crude bound $\psi\leq \sup_{ \tb_{r/4}  } \psi$ on its complement (without loss $aC'C^k r^2< \sup_{ \tb_{r/4}  } \psi$, as otherwise the Harnack inequality is already known). Since $\mathcal{T}$ occupies a definite portion of the measure,
there is some universal constant $0<\gamma<1$ such that 
\begin{equation}
\ov{\psi}(x_0)= \psi(x) \leq (1-\gamma)a C'C^k r^2+ \gamma \sup_{\tb_{r/4}} \psi.
\end{equation}
This holds on a subset whose complement has measure at most $(1-\mu)^k
|B_{r/3}|$. In contrast, \eqref{eqn:upperbarpsi} holds on a subset whose complement has measure at most $(1-\mu)^k
|B_{r/3}|$. By choosing $k$ to be sufficiently large, these two sets must overlap, hence
\[
a C'C^k r^2+ \gamma \sup_{\tb_{r/4}} \psi\geq   \sup_{\tb_r} \psi -aC^k r^2 \geq  \sup_{\tb_{r/4}} \psi -aC^k r^2
\]
After renaming some constants, this amounts to $Ca r^2 \geq  (1-\gamma) \sup_{\tb_{r/4}} \psi$,
which proves \eqref{claim} as required.
\end{proof}

\begin{rmk}\label{rmk:geometricsetting1}
We briefly comment on the geometric setting. The main difference is that the holomorphic volume form deviates from the model case by $O(e^{-cT})$, so that
\[
(1- Ce^{-cT})(T\omega_T)^n \leq \omega_{CY,t}^n\leq (1+ Ce^{-cT})(T\omega_T)^n.
\]
To modify the above argument, we can perturb the model potential $u$ by $O(e^{-cT})$-small amount, to new potentials $u_\pm$, whose corresponding complex Monge-Amp\`ere measures are $(1\pm Ce^{-cT})(T\omega_T)^n$ in the region of interest. In the viscosity solution comparison argument, we simply replace $u$ by $u_{\pm}$. Then instead of the H\"older estimate, we obtain the slightly weaker estimate
\begin{equation}
\mathrm{osc}_{\tb_{r/4}}\psi\leq \Theta\,\mathrm{osc}_{\tb_{r}} \psi + Ce^{-cT}.
\end{equation}
\end{rmk}

\section{De Giorgi type blow up argument}
Our goal is to improve the Harnack estimate into a quadratic approximation. This argument is similar to Savin \cite[section 4]{Sa}, except for the subtlety to pass between the total space and the base. Again we will first deal with the model case, and then comment on the modifications needed to deal with the geometric setup. Without loss we focus on the origin in $B_1$.
Let
\begin{equation}\label{poly}
Q(x)=\frac{1}{2}  \sum_{i,j=1}^m N_{ij}x_i x_j+\sum_{i=1}^m q_i x_i + t,
\end{equation}
be a quadratic polynomial on $\mathbb{R}^m$, and denote with the same notation its pullback via $\pi_T$.

\begin{theorem}\label{DG}
Fix $0<\alpha<1$. There exist a universal small constant $\eta>0$ and a small constant $r_0>0$ and a large constant $r_0'>0$ (depending on $\ve_0$), such that if
\begin{equation}
\|\psi-Q_r\|_{L^\infty(\tb_r)}\leq r^{2+\alpha},\quad \text{for some }r_0'T^{-\frac{1}{2}}\leq r\leq r_0,
\end{equation}
for some quadratic polynomial $Q_r$ on $\mathbb{R}^m$ as in \eqref{poly} with
\[
 \|N\|+  |q|+ |t|\leq C\ve_0 ,
\]
and with
\begin{equation}
F(N,q,t,0)=0,
\end{equation}
where $F$ is as in \eqref{drugo}, then there is a quadratic polynomial $Q_{\eta r}$ on $\mathbb{R}^m$ of the form
\begin{equation}
Q_{\eta r}(x)= \frac{1}{2}\sum_{i,j=1}^m N'_{ij}x_i x_j+\sum_{i=1}^m q'_i x_i + t',
\end{equation}
with $F(N',q',t',0)=0$ and
\begin{equation}\label{osk}
r^2\|N'-N\|+r|q'-q|+|t-t'|\leq Cr^{2+\alpha},
\end{equation}
\begin{equation}
\|\psi-Q_{\eta r}\|_{L^\infty(\tb_{\eta r})}\leq (\eta r)^{2+\alpha},
\end{equation}
for some uniform constant $C$.
\end{theorem}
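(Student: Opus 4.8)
The plan is to argue by contradiction, following Savin's section 4 but carefully tracking the passage between the total space $\tb_r$ and the base $B_r$. Suppose the conclusion fails: then there is a sequence $T_j\to\infty$, radii $r_j$ with $r_0'T_j^{-1/2}\le r_j\le r_0$, potentials $\psi_j=\psi_{T_j}$, and admissible quadratics $Q_{r_j}$ satisfying $\|\psi_j-Q_{r_j}\|_{L^\infty(\tb_{r_j})}\le r_j^{2+\alpha}$, yet for which no quadratic $Q_{\eta r_j}$ with the required properties exists. After subtracting $Q_{r_j}$ and rescaling by $r_j^{-(2+\alpha)}$, i.e.\ setting $w_j(x):=r_j^{-(2+\alpha)}\bigl(\psi_j-Q_{r_j}\bigr)(r_j x)$ on $\tb_1$ (in the stretched coordinates $|\log|t||^{1/2}\zeta_i$, so that the $T^m$-fibers have bounded size at scale $r_j\gtrsim T_j^{-1/2}$), the complex Monge--Amp\`ere equation \eqref{CMA} linearizes in the limit: the difference of the two Hessians is $O(r_j^\alpha)\to 0$, and the fiberwise oscillation of $w_j$ is controlled by the truncated Harnack inequality Theorem \ref{harnack1} applied at all dyadic scales between $r_j$ and $1$ (this is exactly where the hypothesis $r\ge r_0'T^{-1/2}$ is used — it guarantees enough scales above the fiber scale for Harnack to bite). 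Hence $\ov{w_j}-\underline{w_j}\to 0$ uniformly on compact subsets of $B_1$, so any subsequential limit $w_\infty$ is a function of the base variables $x_1,\dots,x_m$ alone.

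The second step is to identify the limiting equation. Since $\underline{\psi}_j$ (resp.\ $\ov{\psi}_j$) is a viscosity supersolution (resp.\ subsolution) of the real Monge--Amp\`ere type equation \eqref{nama} by Lemma \ref{supersol}, and both $\underline{w_j},\ov{w_j}$ converge to the same $w_\infty$, the stability of viscosity solutions forces $w_\infty$ to solve the \emph{linearized} equation $L w_\infty=0$ in $B_1$, where $L$ is the constant-coefficient linearization of $F$ at $(N,q,t,0)$ obtained as the limit of the linearizations of $F$ at $(N_j,q_j,t_j,0)$ (after passing to a further subsequence so these coefficients converge); by (H1)--(H3) this $L$ is uniformly elliptic. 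We also have the normalization $\|w_\infty\|_{L^\infty(B_1)}\le 1$ and $w_\infty$ vanishes appropriately at the origin together with its quadratic part (the constraint $F(N_j,q_j,t_j,0)=0$ survives in the limit). Interior Schauder / $C^{2,\alpha}$ estimates for the linear uniformly elliptic equation $Lw_\infty=0$ then give a quadratic polynomial $P_\infty$ (with $LP_\infty=0$, $\|P_\infty\|\le C$) approximating $w_\infty$ to order $(\eta)^{2+\alpha}$ on $B_\eta$, once $\eta$ is chosen small depending only on $\alpha$ and the ellipticity constants — the standard improvement-of-flatness gain of $\eta^{2+\alpha}$ over $\eta^2\cdot\eta^\alpha$.

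The third step is to unwind: pull $P_\infty$ back to a quadratic on $\mathbb{R}^m$, correct it by an $O(r_j^\alpha)$-small quadratic so that the exact constraint $F(N',q',t',0)=0$ holds (solvable by the implicit function theorem since $D_N F$ is invertible by ellipticity), rescale back to obtain $Q_{\eta r_j}$, and check that $\|\psi_j-Q_{\eta r_j}\|_{L^\infty(\tb_{\eta r_j})}\le(\eta r_j)^{2+\alpha}$ for $j$ large, together with the coefficient bound \eqref{osk}. This contradicts the assumption that no such $Q_{\eta r_j}$ exists. The main obstacle is the first step: making rigorous that the fiberwise max and min of the rescaled potential converge to a common base function, uniformly enough to pass to the viscosity limit. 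This requires combining the truncated Harnack inequality — which only controls oscillation down to the fiber scale — with the subharmonicity of $\psi$ along the fibers and the bounded geometry of the unwrapped model $\tb_{cT^{-1/2}}$, exactly as in the last part of the proof of Theorem \ref{harnack1}; one must ensure the error terms in \eqref{appr} and the $O(e^{-cT})$ deviation of the holomorphic volume form (handled as in Remark \ref{rmk:geometricsetting1}) do not obstruct the limit, which they do not because $r_j\ge r_0'T_j^{-1/2}$ with $r_0'$ large makes $T_j^{-1/2}/r_j\to 0$.
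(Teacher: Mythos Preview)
Your plan is correct and follows essentially the same route as the paper: contradiction, rescale, iterate Harnack to get a truncated H\"older estimate, show the fiberwise max and min have a common limit $w_\infty$ on the base, identify $w_\infty$ as a solution of the constant-coefficient linearized equation, take its quadratic Taylor polynomial, and correct via the implicit function theorem.

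Two points where the paper is more careful than your sketch. First, before applying Theorem~\ref{harnack1} to $\psi-Q_r$, the paper uses Cauchy--Kovalevskaya to replace $Q_r$ by a local power series $\widetilde{Q}_r$ with the same $2$-jet but such that $u+\widetilde{Q}_r$ is an \emph{exact} solution of the real Monge--Amp\`ere equation. This guarantees that $\psi-\widetilde{Q}_r$ solves the complex Monge--Amp\`ere equation \eqref{CMA} with background $u+\widetilde{Q}_r$ in place of $u$, so the Harnack inequality applies verbatim (with unchanged constants, since the perturbation is $C\ve_0$-small). Without this correction, $u+Q_r$ only satisfies the real equation at the origin, and you would have to carry an error through the Harnack iteration. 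Second, your contradiction setup should be arranged so that $r_j\to 0$ and $r_jT_j^{1/2}\to\infty$: since the theorem asserts the \emph{existence} of $r_0,r_0'$, negating it lets you take $r_0\to 0$ and $r_0'\to\infty$ along the sequence. You need $r_j\to 0$ for the linearization (your claim that the Hessian difference is $O(r_j^\alpha)\to 0$), and $T_j^{-1/2}/r_j\to 0$ for the fiberwise oscillation $\overline{w_j}-\underline{w_j}$ to vanish; your phrasing with $r_0,r_0'$ fixed does not deliver either.
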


\begin{proof}
For convenience, we first use Cauchy-Kovalevskaya to find a power series $\widetilde{Q}_r$ with leading Taylor polynomial $Q_r$, such that
\[
\det(D^2 (u+\widetilde{Q}_r)) \left(1+ \sum_{i=0}^m d_i D_i (u+\widetilde{Q}_r)\right)^{n-m} =c_0,
\]
and we can ensure the error $\widetilde{Q}_r-Q_r$ has $L^\infty$-norm on $B_r$ bounded by the negligible amount $C\ve_0 r^3\ll r^{2+\alpha}\ll 1$. Our strategy is to apply the above Harnack inequality Theorem \ref{harnack1}, with the background solution $u$ replaced by $u+\widetilde{Q}_r$. Since this is a very small smooth perturbation, the constants in the Harnack inequality do not depend on this modification.

Let $w$ be the function on $\tb_r$ defined by
$$\psi(x,\theta,z)=\widetilde{Q}_r(x)+r^{2+\alpha}w\left(x,\theta,z\right),$$
which, for $r$ small enough, satisfies
\begin{equation}\label{osc1}
\|w\|_{L^\infty(\tb_r)}\leq r^{-2-\alpha}(\|\psi-Q_r\|_{L^\infty(\tb_r)}+\|\widetilde{Q}_r-Q_r\|_{L^\infty(B_r)})\leq 1+C\ve_0r^{1-\alpha}\leq 2,
\end{equation}
and also 
\begin{equation}\label{osc1l}
\|r^{2+\alpha} w \|_{L^\infty(\tb_r)}\leq \ve_0 r^2.
\end{equation}
We
regard the complex MA equation as an equation on $ w$, and apply the Harnack inequality to $w$ iteratively on balls $\tb_{r/4^k}$, to obtain
\[
\mathrm{osc}_{\tb_{r/4^k}}w\leq \Theta \mathrm{osc}_{\tb_{r/4^{k-1}}}w,\quad 0<\Theta<1.
\]
as long as $r/4^k\geq 100T^{-1/2}$ and the oscillations remain sufficiently small so that one can keep on iterating.

This gives that if $0<100T^{-1/2}\leq \rho<r$ satisfy that
$$ r^{2+\alpha}\mathrm{osc}_{\tb_r}w\leq \ve_0 r^2,$$
then there is a universal constant $0<\beta\ll 1$, such that the truncated H\"older modulus of continuity estimate
\begin{equation}\label{iter1}
\mathrm{osc}_{\tb_\rho}w\leq \left(\frac{\rho}{r}\right)^\beta \mathrm{osc}_{\tb_r}w \leq C\left(\frac{\rho}{r}\right)^\beta ,
\end{equation}
holds as long as
\[
r^{2+\alpha}\left(\frac{\rho}{r}\right)^\beta\mathrm{osc}_{\tb_r}w\leq \ve_0 \rho^2.
\]
Since $\|w\|_{L^\infty(\tb_r)}\leq 2$ by \eqref{osc1}, this holds as long as
\begin{equation}\label{iter4}
\frac{\rho}{r}\geq (4\ve_0^{-1}r^{\alpha})^{\frac{1}{2-\beta}},\quad \rho\geq 100T^{-1/2}.
\end{equation}

We can then prove the theorem by contradiction. If the conclusion does not hold, then for $\eta>0$ to be chosen below, we can find sequences $r_k\to 0, r'_k\to\infty$ and $T_k\to\infty$ with
$r_k\geq r_k'T_k^{-\frac{1}{2}}$, and solutions $w_k$ on $\tb_{r_k}$ which satisfy \eqref{osc1}, with corresponding $N_k,q_k,t_k$ as in the statement of the theorem, such that the estimate
\begin{equation}\label{osc2}
\|w_k-Q'_k\|_{L^\infty(\tb_{\eta r_k})}\leq (\eta r_k)^{2+\alpha},
\end{equation}
does not hold for any quadratic polynomial $Q'_k$ on $\mathbb{R}^m$ as in \eqref{poly} with
\begin{equation}\label{osc2b}
F(N_k+r_k^{2+\alpha} N'_k,q_k+r_k^{2+\alpha}q'_k,t_k+r_k^{2+\alpha}t'_k,0)=0,
\end{equation}
and
\begin{equation}\label{osc2k}
r_k^2\|N'_k\|+r_k|q'_k|+|t'_k|\leq C.
\end{equation}

Now let $\underline{w_k}$ (resp. $\ov{w_k}$) be the
function on the base $B_1$, defined by the (domain-rescaled) fiberwise minimum (resp. maximum)
\begin{equation}\label{deff}
\underline{w_k}(x)= \min_{\pi_T(z)= r_k x } w(z), \quad \ov{w}_k= \max_{\pi_T(z)= r_k x } w(z).\end{equation}
 Let
\[
\rho_k:= \max\left( r_k(\ve_0^{-1}r_k^{\alpha})^{\frac{1}{2-\beta}},100T_k^{-\frac{1}{2}}\right)\to 0,
\]
and note that $\frac{\rho_k}{r_k}\to 0$ as well.
As long as $\rho> \rho_k$,
the above
truncated Harnack inequality  gives
\begin{equation}\label{osc3}
\mathrm{osc}_{\tb_\rho}w_k\leq C\left(\frac{\rho}{r_k}\right)^\beta.
\end{equation}
The same argument works when the origin is replaced by any $x\in B_{1/2}$, so on $B_{1/2}$,
\[
|\ov{w_k}-\underline{w_k}|(x)\leq \mathrm{osc}_{\tb_{\rho_k}(x)}w_k\leq C\left(\frac{\rho_k}{r_k}\right)^\beta\to 0,
\]
and both $\ov{w}_k$ and $\underline{w}_k$ satisfy the uniform H\"older estimates for all $ \frac{\rho_k}{r_k}< \rho<\frac{1}{2}$,
\[
\mathrm{osc}_{B_\rho} \ov{w}_k  +\mathrm{osc}_{B_\rho} \underline{w}_k \leq C\rho^\beta.
\]
As $N_k\to N_*, q_k\to q_*, t_k\to t_*$, by Arzel\`a-Ascoli, on $B_{1/2}$ we can then take subsequential $C^0$-limits $w_\infty$ of both $\ov{w_k}$ and $\underline{w}_k$, which have to be the same.
We can argue as in \cite[Page 572-573]{Sa}, to check that $w_\infty$ is a viscosity solution of the constant coefficient Laplace equation
\[
\Delta_{ D^2 u(0) + N_*  }  w_\infty=0,
\]
using the subsolution (resp. supersolution) property for $\ov{w_k}$ and $\underline{w}_k$.

By standard elliptic regularity, $w_\infty$ is smooth, and the quadratic polynomial
$$Q_\infty(x):=\frac{1}{2}\sum_{i,j=1}^mD^2_{ij}w_\infty(0) x_i x_j+\sum_{i=1}^m D_iw_\infty(0) x_i +w_\infty(0),$$
satisfies
\begin{equation}\label{osc4}
\|w_\infty-Q_\infty\|_{L^\infty(B_\eta)}\leq C\eta^3\leq \frac{1}{2}\eta^{2+\alpha},
\end{equation}
for some uniform $C>0$, and for $\eta>0$ sufficiently small (this is where we fix the value of $\eta$), and also
\begin{equation}
\text{Tr}_{D^2 u(0) + N_*}  D^2 Q_\infty=0.
\end{equation}
By assumption,
\[
\det(D^2u(0)+N_k)\left(1+\sum_{i=1}^md_i(D_i u(0)+ q_k)\right)^{n-m}=c_0.
\]
By the implicit function theorem,
for each $k$ large, we can find $s_k=O(r_k^\alpha)\to 0$, such that $\ti{N}_k:=D^2w_\infty(0)+s_k\mathrm{Id}$ solves the matrix equation
\begin{equation}\label{pde2}
\det(D^2u(0)+N_k+r_k^\alpha\ti{N}_k)\left(1+\sum_{i=1}^md_i(D_i u(0)+ q_k+r_k^{1+\alpha}Dw_\infty(0) )\right)^{n-m}=c_0.
\end{equation}
 Then, the uniform convergence of $\ov{w}_k, \underline{w}_k$ to $w_\infty$, together with \eqref{osc4}, and recalling the coordinate stretching in the definition \eqref{deff}, give us that the quadratic polynomial
\begin{equation}
Q'_k(x):= \frac{1}{2}r_k^{-2}\sum_{i,j=1}^m (\ti{N}_k)_{ij}x_i x_j+r_k^{-1}\sum_{i=1}^m D_iw_\infty(0) x_i+w_\infty(0),
\end{equation}
satisfies
\begin{equation}
\|w_k-Q'_k\|_{L^\infty(\tb_{\eta r_k})}\leq (\eta r_k)^{2+\alpha},
\end{equation}
for all $k$ large (i.e. it satisfies \eqref{osc2}), as well as \eqref{osc2b} (thanks to \eqref{pde2}), and whose coefficients satisfy \eqref{osc2k} for some uniform $C$, and this is a contradiction.
\end{proof}

\begin{rmk}
We comment on the modification in the geometric setting. Using Remark \ref{rmk:geometricsetting1}, our truncated H\"older modulus of continuity (\ref{iter1}) is replaced by 
\begin{equation}
\mathrm{osc}_{\tb_\rho}w\leq C\left(\frac{\rho}{r}\right)^\beta + C e^{-cT} ,\quad 
\end{equation}
as long as (\ref{iter4}) holds. Notice that under (\ref{iter4}), the $O(e^{-cT})$ is exponentially small compared to the main term $\left(\frac{\rho}{r}\right)^\beta$. Thus following the above proof almost verbatim, we can extract the subsequential limit $w_\infty$ for both $\ov{w}_k$ and $\underline{w}_k$, and show that $w_\infty$ is harmonic. Finally, the $O(e^{-cT})$ error can be cosmetically absorbed into the $\eta^{2+\alpha}$ term, so the statement of Theorem \ref{DG} works the same way in the geometric setting. 
\end{rmk}

We fix the constants $r_0, r_0'$ from Theorem \ref{DG}. The following argument works the same way for the model case and the geometric setting.

\begin{corollary}\label{DG2}

Fix $0<\alpha<1$. The following holds when $T$ is sufficiently large and
$
\sup_{\tb_1 } |\psi| \leq \ve\ll 1
$
is sufficiently small. For $r_0'T^{-\frac{1}{2}}\leq r\leq r_0$, and $y\in B_{1/2}$, we have some quadratic polynomial $Q_{y,r}$,
\[
\begin{cases}
Q_{y,r}(x)= \frac{1}{2}\sum_{i,j=1}^m N_{ij}x_i x_j+\sum_{i=1}^m q_i x_i + t,
\\
\|N\|+ |q|+ |t|\leq C(r_0)\ve \ll \ve_0,
\\
F(N,q,t,0)=0,
\end{cases}
\]
such that
$
\|\psi-Q_{y,r} \|_{L^\infty(\tb_r(x))}\leq r^{2+\alpha}.
$

\end{corollary}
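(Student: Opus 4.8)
The plan is to obtain Corollary \ref{DG2} by iterating the one-step improvement of Theorem \ref{DG}, starting from the trivial quadratic at the top scale $r_0$. Fix a center $y\in B_{1/2}$ and pass to coordinates centered at $y$, so that the constraint $F(0,0,0,0)=0$ holds simply because $\det(D^2u(0))\left(1+\sum_{i=1}^m d_iD_iu(0)\right)^{n-m}=c_0$ is the real Monge-Amp\`ere equation satisfied by $u$. Since $\sup_{\tb_1}|\psi|\le\ve$, for $\ve$ small enough in terms of $r_0$ the zero polynomial $Q\equiv 0$ verifies $\|\psi-Q\|_{L^\infty(\tb_{r_0})}\le\ve\le r_0^{2+\alpha}$ as well as the coefficient bound and $F=0$, so $(r_0,Q\equiv 0)$ is an admissible initial datum for Theorem \ref{DG}; provided $T$ is large enough that $r_0\ge r_0'T^{-1/2}$, we may then start iterating. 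To absorb the multiplicative constant that will appear at non-dyadic scales I would run Theorem \ref{DG} with a H\"older exponent $\alpha'\in(\alpha,1)$ slightly larger than $\alpha$ and, at the end, use $Cr^{2+\alpha'}\le r^{2+\alpha}$ for $r\le r_0$ (valid once $r_0$ is small in terms of $C$ and $\alpha'-\alpha$) to return to the exponent $\alpha$.

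Iterating Theorem \ref{DG} produces quadratic polynomials $Q_{\eta^j r_0}$ at the scales $\eta^j r_0$, each with $F=0$ and $\|\psi-Q_{\eta^j r_0}\|_{L^\infty(\tb_{\eta^j r_0})}\le(\eta^j r_0)^{2+\alpha'}$, and one keeps iterating as long as $\eta^{j-1}r_0\ge r_0'T^{-1/2}$, so the scales $\eta^j r_0$ exhaust the whole range $[r_0'T^{-1/2},r_0]$ up to a bounded factor; a radius $r$ in that range not equal to any $\eta^j r_0$ is then covered by the polynomial at the nearest larger dyadic scale, at the cost of the factor $\eta^{-(2+\alpha')}$ that the exponent shift was introduced to absorb. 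The coefficient bounds come from summing the telescoping estimates \eqref{osk} along the chain, which are convergent geometric series. The delicate point, and the main obstacle, is that the crude use of \eqref{osk} only gives coefficient bounds of size $C(r_0)$ rather than $C(r_0)\ve$, and that one must verify that the coefficient hypothesis $\|N\|+|q|+|t|\le C\ve_0$ of Theorem \ref{DG} survives the entire iteration. Both are dealt with by carrying the finer, $\ve$-weighted bookkeeping through the iteration: the proof of Theorem \ref{DG} (the harmonic-approximation step, and the implicit function theorem input) is linear in $\|w\|_{L^\infty}\sim\delta_s/s^{2+\alpha'}$, where $\delta_s:=\|\psi-Q_s\|_{L^\infty(\tb_s)}$, so the increments of the quadratic, linear and constant parts of $Q_s$ at scale $s$ are really of order $\delta_s/s^2$, $\delta_s/s$ and $\delta_s$, while $\delta_s$ itself decays like $s^{2+\alpha'}$ relative to its initial value $\delta_{r_0}\le\ve$; and the Cauchy-Kovalevskaya corrections $\widetilde{Q}_s-Q_s$ have size controlled by the coefficient sizes of $Q_s$, hence inductively by $\ve$. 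Feeding all this back, the summed increments are $\le C(r_0)\ve$, which is at the same time $\ll\ve_0$ once $\ve$ is small in terms of $r_0$ and the universal constant $\ve_0$, so the coefficient hypothesis indeed propagates and the iteration runs down to the truncation scale.

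All the constants above are independent of $y$ and of $T$ large --- the structural hypotheses (H1)--(H3) for the operator $F(M,p,z,x)$ and the Monge-Amp\`ere equation for $u$ hold uniformly on the relevant compact subset --- so the estimate is uniform over $y\in B_{1/2}$. In the geometric setting the only modification is the $O(e^{-cT})$ discrepancy in the complex Monge-Amp\`ere equation, already absorbed in the remarks following Theorems \ref{harnack1} and \ref{DG}; it is exponentially small compared with every scale $r\ge r_0'T^{-1/2}$ appearing here, hence can be harmlessly absorbed into the $r^{2+\alpha}$ errors, so the argument goes through verbatim. In short, the scheme is a routine iteration of the improvement lemma; the only step that is not pure bookkeeping is keeping all coefficient bounds at the $\ve$-weighted scale throughout, which is needed both for the stated estimate and to maintain the hypotheses of Theorem \ref{DG} at each step.
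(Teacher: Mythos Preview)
Your approach is correct and matches the paper's: iterate Theorem \ref{DG} starting from $Q\equiv 0$ at scale $r_0$ and sum the increments \eqref{osk} as a geometric series. You are in fact more explicit than the paper on two points it leaves implicit --- handling non-dyadic radii via the exponent shift $\alpha'>\alpha$, and the $\ve$-weighted coefficient bound, which indeed requires extracting the linear dependence on $\|w\|_{L^\infty}$ from inside the proof of Theorem \ref{DG} (since \eqref{osk} as literally stated only sums to $C(r_0)$, not $C(r_0)\ve$).
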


\begin{proof}
Without loss $y=0$ is the origin.
Let $\eta,r_0,r_0'$ be given by Theorem \ref{DG}.
For $r=r_0$, we can take $Q_{r_0}=0$, and the estimate
\[
\|\psi \|_{L^\infty(\tb_{r_0})}\leq r_0^{2+\alpha}
\]
holds for
$
\sup_{\tb_1 } |\psi| \leq \ve\ll 1.
$
By induction, as long as $r_0 \eta^k\geq r_0' T^{-1/2}$, we can construct quadratic polynomials $Q_{\eta^k r_0}$ satisfying
\[
\|\psi-Q_{\eta^k r_0} \|_{L^\infty(\tb_{\eta^k r_0})}\leq \eta^{2+\alpha}\|\psi-Q_{\eta^{k-1} r_0} \|_{L^\infty(\tb_{\eta^{k-1} r_0})} \leq \ldots \leq (\eta^k r_0)^{2+\alpha},
\]
whose coefficients $(N_k, q_k, t_k)$ satisfy
\[
F(D^2u(0)+N_k,q_k,t_k,0)=0,
\]
and by \eqref{osk} and the summability of the geometric series,
\[
\|N_k \|+ |q_k|+ |t_k|\leq C(r_0)\ve \ll \ve_0.
\]
\end{proof}

\section{Higher order estimates on the generic region}

We now promote the estimate in Corollary \ref{DG2} to the desired higher order derivatives, focusing on the model setup, and making very minor adjustment in the geometric setting. We recall that on the local universal cover of $\tb_{T^{-1/2}}(y)$ for $y\in B_{1/2}\subset B_1$, we have the local coordinates $\zeta_1=-T^{-1}\log z_1,\ldots \zeta_m=-T^{-1}\log z_m$ and $z_{m+1},\ldots z_n$, and the reference metric $\omega_T$ is uniformly equivalent to
\[
\omega^{\rm ref}_T:=\sum_{i=1}^m \sqrt{-1} d\zeta_i\wedge d\ov{\zeta_i}+T^{-1}\sum_{i=m+1}^n    \sqrt{-1}  dz_i\wedge d\ov{z_i},
\]
so it is easy to check that $T\omega_T$ is $C^\infty$-uniformly equivalent to the standard Euclidean metric in the coordinates $T^{1/2} \zeta_1,\ldots T^{1/2} \zeta_m, z_{m+1},\ldots z_n$; the point is that the uniform constants are independent of $T$.

\begin{corollary}\label{DG3}
The following holds when $T$ is sufficiently large and
$
\sup_{\tb_1 } |\psi| \leq \ve\ll 1
$
is sufficiently small. In the coordinates $T^{1/2} \zeta_1,\ldots T^{1/2} \zeta_m, z_{m+1},\ldots z_n$, the Calabi-Yau metric $\omega_T+ \ddbar \psi_T$ satisfies
\[
T \|  \ddbar \psi_T \|_{ C^k(\tb_{T^{-1/2}}(y), T\omega_T^{\rm ref}) }\leq C(k) (\ve+ T^{-\alpha/2}) \ll 1.
\]
\end{corollary}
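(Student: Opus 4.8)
The plan is to reduce, near each $y\in B_{1/2}$, to Savin's \emph{original} (non-truncated) small perturbation theorem for the complex Monge--Amp\`ere equation on a \emph{unit-size} ball: this is produced by stretching the coordinates by $T^{1/2}$ so that the fiber scale $T^{-1/2}$ becomes $O(1)$, and the truncated estimate of Corollary~\ref{DG2} is exactly what makes $\psi_T$, in that rescaled picture, $L^\infty$-close to a smooth background solution. I will argue in the model case, the geometric one being identical up to the $O(e^{-cT})$ volume-form perturbation treated as in Remark~\ref{rmk:geometricsetting1}. First I would apply Corollary~\ref{DG2} at the smallest admissible scale $r:=r_0'T^{-1/2}$, getting a quadratic $Q$ with $F(N,q,t,0)=0$, $\|N\|+|q|+|t|\le C\ve$, and $\|\psi-Q\|_{L^\infty(\tb_r(y))}\le r^{2+\alpha}=(r_0')^{2+\alpha}T^{-1-\alpha/2}$. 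As in the proof of Theorem~\ref{DG}, I would upgrade $Q$ via Cauchy--Kovalevskaya to a real-analytic power series $\widetilde Q$ in the base variables with $\|\widetilde Q-Q\|_{L^\infty(B_r)}\le C\ve r^3$ solving $\det(D^2(u+\widetilde Q))(1+\sum_i d_iD_i(u+\widetilde Q))^{n-m}=c_0$; then $\hat\omega:=\omega_T+\ddbar\widetilde Q=T^{-1}\omega_Y+\ddbar(u+\widetilde Q)$ is K\"ahler (for $\ve$ small) and, by \eqref{null}, $\hat\omega^n=\omega_T^n$. Writing $\tilde\psi:=\psi-\widetilde Q$, the Monge--Amp\`ere equation becomes \emph{homogeneous}, $(\hat\omega+\ddbar\tilde\psi)^n=\hat\omega^n$, with $\|\tilde\psi\|_{L^\infty(\tb_r(y))}\le 2r^{2+\alpha}\le CT^{-1-\alpha/2}$.

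Next I would pass to the stretched holomorphic coordinates $w=(w_1,\dots,w_n)$, $w_i=T^{1/2}\zeta_i$ for $i\le m$ and $w_i=z_i$ for $i>m$, on the local universal cover of $\tb_r(y)$ (unwrapping the $T^m$-fibers). There $T\omega^{\rm ref}_T$ is the Euclidean metric, $T\omega_T$ is $C^\infty$-uniformly equivalent to it with $T$-independent bounds (as recorded before the statement), and $T\hat\omega=T\omega_T+T\ddbar\widetilde Q$ differs from $T\omega_T$ by a term which is $O(\ve)$ in $C^0$ and $O(\ve T^{-1/2})$ in higher norms (each $w$-derivative of $\widetilde Q$, a function of $T^{-1/2}\mathrm{Re}\,w$, gains a factor $T^{-1/2}$); hence $\hat g:=T\hat\omega$, read as a Hermitian matrix in the $w$-coordinates, is $C^\infty$-uniformly equivalent to Euclidean with bounds independent of $T$. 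The region $\tb_r(y)$ has base-radius $r_0'$ in the $w$-coordinates, its $T^m$-directions collapse to size $O(T^{-1/2})$ and its $Y$-directions keep a fixed size, so it is covered by a number, bounded independently of $T$, of unit balls $B_1(p)\subset\mathbb{C}^n$ contained in $\tb_r(y)$ whose concentric $B_{1/4}(p)$ cover $\tb_{T^{-1/2}}(y)$. Multiplying $(\hat\omega+\ddbar\tilde\psi)^n=\hat\omega^n$ by $T^n$ and setting $\phi:=T\tilde\psi$, on each $B_1(p)$ one has $\det(\hat g+\ddbar\phi)=\det\hat g$ with $\hat g+\ddbar\phi>0$ and $\|\phi\|_{L^\infty(B_1(p))}\le CT^{-\alpha/2}=:\delta$, where $\hat g$ and $\ddbar\phi$ are read as Hermitian matrices in the $w$-coordinates.

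This is exactly Savin's setting with background solution $\phi\equiv0$: the operator $(M,w)\mapsto\det(\hat g(w)+M)-\det\hat g(w)$ vanishes at $M=0$ and is, near $M=0$, uniformly elliptic and $C^1$ with constants controlled only by the ($T$-independent) bounds on $\hat g$, so hypotheses (H1)--(H3) of \cite{Sa} hold uniformly in $T$; and $\phi$, being a smooth solution with $\hat g+\ddbar\phi>0$, is a viscosity solution of the corresponding elliptic branch. Since $\|\phi\|_{L^\infty(B_1(p))}\le\delta\ll1$ for $T$ large, Savin's theorem gives $\|\phi\|_{C^{2,\gamma}(B_{1/2}(p))}\le C\delta$ for some uniform $\gamma\in(0,1)$. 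Then $\hat g+\ddbar\phi$ is uniformly elliptic with $C^\gamma$ coefficients, and differentiating $\log\det(\hat g+\ddbar\phi)=\log\det\hat g$ --- noting that $(\hat g+\ddbar\phi)^{-1}$ and $\hat g^{-1}$ differ by $O(\|\ddbar\phi\|)=O(\delta)$ --- makes each differentiated equation a uniformly elliptic linear equation for a derivative of $\phi$ with right-hand side of $C^\gamma$-norm $O(\delta)$, whence a standard Schauder bootstrap yields $\|\phi\|_{C^k(B_{1/4}(p))}\le C(k)\delta$ for all $k$. Finally, as invariant forms $T\ddbar\psi_T=T\ddbar\widetilde Q+\ddbar\phi$, with $C^k$-norms in the $w$-coordinates (equivalently, with respect to $T\omega^{\rm ref}_T$) bounded by $C\ve$ and by $C(k)\delta\le C(k)T^{-\alpha/2}$ respectively; combining over the finite covering gives $T\|\ddbar\psi_T\|_{C^k(\tb_{T^{-1/2}}(y),\,T\omega^{\rm ref}_T)}\le C(k)(\ve+T^{-\alpha/2})$, as claimed. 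In the geometric setting one only replaces $u$ by the negligible perturbations $u_\pm$ of Remark~\ref{rmk:geometricsetting1} to absorb the $O(e^{-cT})$ error in the volume form, which changes nothing above.

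I expect the main point --- and the reason all the preceding sections are needed --- to be the interplay of total space and base: the truncated H\"older estimate is available only down to the fiber scale $r_0'T^{-1/2}$, but that is precisely the scale at which, after the $T^{1/2}$-stretching, the collapsing region becomes uniformly non-collapsed and the complex Monge--Amp\`ere equation becomes a bona fide small perturbation of a smooth equation with $T$-independent structural constants, so that Savin's original theorem and elliptic bootstrapping apply without loss. The remaining ingredients --- the uniform covering of the collapsing region, the verification of (H1)--(H3) for the complex Monge--Amp\`ere operator near the zero Hessian, and the fact that $\hat g+\ddbar\phi>0$ (immediate, as this is the Calabi--Yau metric) places us on the elliptic branch --- should be routine.
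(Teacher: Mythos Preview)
Your proposal is correct and follows essentially the same route as the paper: apply Corollary~\ref{DG2} at the critical scale $r=r_0'T^{-1/2}$, subtract the quadratic approximant, multiply by $T$ and pass to the stretched coordinates $T^{1/2}\zeta_i,z_j$ where the geometry is uniformly non-collapsed with $T$-independent bounds, then invoke Savin's original theorem and bootstrap to higher order. The only (harmless) variation is that you insert a Cauchy--Kovalevskaya step to make the rewritten equation exactly homogeneous in $\phi=T(\psi-\widetilde Q)$, whereas the paper works directly with the bare quadratic $Q_{y,r}$ and applies Savin's theorem to the resulting equation; the covering and Schauder bootstrap you spell out are implicit in the paper's one-line appeal to \cite{Sa}.
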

\begin{proof}
Applying Corollary \ref{DG2} to $r= r_0' T^{-1/2}$, we deduce that on  $\tb_{T^{-1/2}}(y)$,
\[
\|\psi-Q_{y,r} \|_{L^\infty(\tb(y,r))}\leq r^{2+\alpha}  ,
\]
while the coefficients of the quadratic $Q_{y,r}$ satisfies $\|N\|+ |q|+ |t|\leq C(r_0)\ve$.
The Calabi-Yau metric satisfies
\[
(\omega_T + \ddbar{\psi} )^n= \omega_T^n,
\]
which can be rewritten as the elliptic equation in $T(\psi- Q_{y,r} )$,
\[
( T  (\omega_T + \ddbar{Q_{y,r}  }) +   \ddbar{ T(\psi- Q_{y,r} ) } )^n= (T\omega_T)^n.
\]
Both $ T  (\omega_T + \ddbar{Q_{y,r}  })$ and $T\omega_T$ are $C^\infty$-uniformly equivalent to the background metric $T\omega_T^{\rm ref}$, while on  $\tb_{T^{-1/2}}(y)$ the potential satisfies the estimate
\[
|T(\psi- Q_{y,r} ) | \leq T (r_0' T^{-1/2})^{2+\alpha} \leq C(r_0') T^{-\alpha/2}.
\]
The constant $r_0'$ is fixed, so this is arbitrarily small for large $T$.
Applying Savin's main theorem \cite{Sa} to our local coordinate system, this $C^0$-smallness can be bootstrapped into the higher order estimate
\[
\|T(\psi- Q_{y,r} ) \|_{ C^{k+2}(\tb_{T^{-1/2}}(y), T\omega_T^{\rm ref}) } \leq C(k,r_0') T^{-\alpha/2}.
\]
On the other hand, by differentiating the quadratic function $Q_{y,r}$,
\[
\|T \ddbar Q_{y,r}  \|_{ C^{k}(\tb_{T^{-1/2}}(y), T\omega_T^{\rm ref}) } \leq C\| N\|  + C |q| \leq C(r_0) \ve.
\]
Combining the two contributions leads to the result.
\end{proof}

Finally, we sketch how to modify this to prove the main Theorem \ref{main}. We recall that we are allowed to slightly shrink the ``generic region" $U_t$ in the argument, as long as the normalized Calabi-Yau measure of its complement is smaller than any prescribed $\delta>0$. We can find compact subsets $K_0,\ldots K_m$ properly contained inside the interior of $\Delta_0,\ldots \Delta_m$, such that its complement has Lebesgue measure $\ll \delta$. Then $r_0= \text{dist}(K_k, \partial \Delta_k )>0$ is a small number independent of $t$, and we can define $U_t$ as above. By the main result of \cite{Li2}, the $C^0$-norm $\| \psi \|_{C^0(X_t)} \to 0$ as $t\to 0$, so $\|  \psi \|_{C^0(X_t)}$ is far smaller than the constant $\ve_0$, and we can run the De Giorgi type blow up argument, to prove Corollary \ref{DG2}. We can then follow the proof of 
Corollary \ref{DG3}, noting that the change of the normalized Calabi-Yau volume form from the model to the geometric case only results in $O(e^{-cT})$-error, which is negligible. The conclusion is that up to slightly shrinking the compact sets $K_0,\ldots K_m$, then for any $y\in K_k$ ($k=0,1,\ldots m$), 
\[
T \|  \ddbar \psi_T \|_{ C^k(\tb_{T^{-1/2}}(y), T\omega_T^{\rm ref}) }\leq C(k) ( \| \psi\|_{C^0} + T^{-\alpha/2}) \to 0,\quad t\to 0 .
\]
In particular, we have the metric convergence
\[
\lim_{t\to 0} \|  \omega_{CY,t}- \omega_t    \|_{C^0(\tb_{T^{-1/2}}(y), \omega_t)} =0.
\]
But the region on $X_t$ not covered by these $\tb_{T^{-1/2}}(y)$ has normalized Calabi-Yau measure $<\delta$, so we conclude the metric convergence on the generic region.

\end{document}